\newtheorem{theorem}{Theorem}[section]
\newtheorem{lemma}[theorem]{Lemma}
\theoremstyle{definition}
\newtheorem{corollary}[theorem]{Corollary}
\newtheorem{conjecture}[theorem]{Conjecture}
\newtheorem{proposition}[theorem]{Proposition}
\newtheorem{remark}[theorem]{Remark}
\numberwithin{equation}{section}
\date{\today}
\begin{document}


\title[On integral variations for roots of the Laplacian matching polynomial of graphs]
{On integral variations for roots of the Laplacian matching polynomial of graphs}

\keywords{Laplacian matching polynomial; Integral variation; Subdivision; TU-subgraphs}

\author{Yi Wang $^1$, Hai-Jian Cui $^1$, Sebastian M. Cioaba $^2$}
\address{1. School of Mathematical Sciences, Anhui University, Hefei 230601, Anhui, China \\2. Department of Mathematical Sciences, University of Delaware, Newark, DE 19716-2553, USA}
\email{wangy@ahu.edu.cn, cuihj@stu.ahu.edu.cn, cioaba@udel.edu}
\thanks{{\it Funding.} Supported by the National Natural Science Foundation of China (No. 12171002, 12331012)}
\date{\today}
\begin{abstract}
In this paper, we study the Laplacian matching polynomial of a graph and the effect of adding edges to a graph on the roots (called Laplacian matching roots) of this polynomial. In particular, we investigate the conditions under which the Laplacian matching roots change by integer values. We prove that the Laplacian matching root integral variation in one place is impossible and the Laplacian matching root integral variation in two places is also impossible under some constraints.
\end{abstract}
\maketitle

\section{Introduction}

There are several polynomials associated with a graph such as the characteristic polynomial, the chromatic polynomial, the matching polynomial, and the Tutte polynomial among others. Investigating such  polynomials and their connections to the graph properties is an important topic in graph theory, see \cite{EMM} for example. In this paper, we study how the roots of the Laplacian matching polynomial of a graph change by integer quantities after adding an edge. Before stating our results, we introduce some notations and definitions. 

Throughout this paper, all graphs are assumed to be finite, undirected, and without loops or multiple edges. 
Let $G$ be a graph with vertex set $V(G)=\left\{v_1,\ldots,v_n\right\}$ and edge set $E(G)=\left\{e_1,\ldots,e_m\right\}$. 
The {\em complement} of $G$ is the graph whose vertex set is $V(G)$ and whose edge set is the complement of $E(G)$ in the set of all unordered pairs of vertices. For a vertex $v$ of $G$, we denote by $N(v)$ the set of all vertices of $G$ adjacent to $v$. {\em The degree} of $v$ is defined as $|N(v)|$, and is denoted by $d(v)$. The maximum degree of the vertices of $G$ is denoted by $\Delta(G)$. When $e$ is an edge of the complement of $G$, $G+e$ denotes the graph obtained by adding $e$ to $G$. For a subset $W$ of $V(G)$, we use $G[W]$ to denote the subgraph of $G$ induced by $W$.
For a subset $M$ of $E(G)$, we use $V(M)$ to denote the set of vertices of $G$ each of which is an endpoint of one of the edges in $M$. If no two distinct edges in $M$ share a common endpoint, then $M$ is called a {\em matching} of $G$. The set of matchings of $G$ is denoted by $\mathcal{M}(G)$. A matching with $j$ edges is called a {\em $j$-matching}. For an integer $j$,  we denote by $\phi_j(G)$ the number of $j$-matchings of $G$, with the convention that $\phi_0(G)=1$. 

The {\it matching polynomial} of $G$ is defined as
\begin{equation}\label{eq:Mdef}
\mathscr{M}_G(x)=\sum_{M\in\mathcal{M}(G)}(-1)^{|M|}x^{|V(G)\setminus V(M)|}=\sum_{j=0}^{\lfloor n/2\rfloor}(-1)^j\phi_j(G)x^{n-2j}.
\end{equation}
Motivated by studies in statistical physics, Heilmann and Lieb introduced this polynomial in \cite{HL} and proved that for a graph $G$ with maximum degree $\Delta=\Delta(G)\geq 2$, the roots of $\mathscr{M}_G(x)$ are real and lie in the interval $(-2\sqrt{\Delta-1}, 2\sqrt{\Delta -1})$. A graph $G$ is a forest if and only if its matching polynomial is identical to the characteristic polynomial of its adjacency matrix, see \cite[Ch.2]{Godsil1}.

A signed graph $(G,s)$ consists of a graph $G$ and a sign function $s:E(G)\rightarrow \{1,-1\}$. The {\em adjacency matrix} of the signed graph $(G,s)$ is the $V(G)\times V(G)$ matrix whose $(u,v)$-entry equals $s(u,v)$, if $uv\in E(G)$, and $0$, otherwise. Signed graphs and their adjacency matrices have been used by Huang \cite{Huang} in his celebrated proof of the Sensitivity Conjecture, by Marcus, Spielman and Srivastava \cite{MSS} in their breakthrough result showing the existence of infinite bipartite Ramanujan graphs of every degree and have close connections to other areas of mathematics such as equiangular lines. Godsil and Gutman \cite{Godsil3} proved that the average of the adjacency characteristic polynomials of all signed graphs with underlying graph $G$ equals the matching polynomial of $G$. Using these results, Marcus, Spielman and Srivastava \cite{MSS} developed the theory of interlacing polynomials in their study of Ramanujan graphs.

Motivated by the above relation between the adjacency characteristic polynomial and the matching polynomial, it is natural to ask: {\em What is the average of Laplacian characteristic polynomials of all signed graphs with underlying graph $G$?} Mohammadian \cite{Moham} studied this question and named this average polynomial {\it the Laplacian matching polynomial} of $G$, denoted by $\mathscr{LM}_G(x)$, and proved that it has the following expression
\begin{equation}\label{LMdef}
\mathscr{LM}_G(x)=\sum_{M\in\mathcal{M}(G)}(-1)^{|M|}\prod_{v\in V(G)\setminus V(M)}(x-d(v)).
\end{equation}

Mohammadian \cite{Moham} proved that the Laplacian matching polynomial and the Laplacian characteristic polynomial of a graph $G$ are identical if and only if $G$ is a forest. Independently, Zhang and Chen \cite{chen} also studied this polynomial and called it {\it the average Laplacian polynomial} of $G$. Equations \eqref{eq:Mdef} and \eqref{LMdef} imply that if $G$ is a $d$-regular graph, then $\mathscr{LM}_G(x)=\mathscr{M}_G(x-d)$. Hence, the Laplacian matching polynomial may be also interpreted as a generalization of the matching polynomial from regular graphs to general graphs. Mohammadian \cite{Moham} also showed that all Laplacian matching roots of $G$ are real and nonnegative. We denote them by $\lambda_1(G)\geq \ldots \geq \lambda_n(G)\geq 0$. Moreover, Wan, Wang, and Mohammadian \cite{WWM} proved the following interlacing result.
\begin{theorem} {\em \cite{WWM}}
If $G$ is a graph of order $n$ and $e$ is an edge of the complement of $G$, then the Laplacian matching roots of $G+e$ interlace those of $G$, that is,
 \begin{equation} \label{interlace}
 \lambda_1(G+e)\geq \lambda_1(G) \geq \lambda_2(G+e)  \geq \lambda_2(G) \geq \cdots \geq \lambda_{n}(G+e) \geq \lambda_n(G).
 \end{equation}
\end{theorem}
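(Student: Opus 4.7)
My plan is to derive a clean ``edge-addition identity'' for $\mathscr{LM}$, from which the interlacing will follow by a sign analysis at the roots of $\mathscr{LM}(G,x)$. The identity I aim to establish is
\begin{equation*}
  \mathscr{LM}(G+e, x) \;=\; \mathscr{LM}(G, x) \;-\; \mathscr{LM}^{G}(G - u, x) \;-\; \mathscr{LM}^{G}(G - v, x),
\end{equation*}
where $e = uv$ and $\mathscr{LM}^{G}(H, x) := \sum_{M \in \mathcal{M}(H)} (-1)^{|M|} \prod_{w \in V(H) \setminus V(M)} (x - d_{G}(w))$ is the Laplacian matching polynomial of an induced subgraph $H$ computed with the degree sequence inherited from $G$. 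This can be proved combinatorially, by splitting the matchings of $G+e$ according to whether $e \in M$ and tracking how the degree shift $d_{G}(u), d_{G}(v) \mapsto d_{G}(u)+1, d_{G}(v)+1$ propagates through the product, or via Mohammadian's signed-graph representation: for each signing $\sigma$ of $E(G)$ and each sign $\tau \in \{\pm 1\}$ on $e$, $L^{\sigma \cup \tau}_{G+e} = L^{\sigma}_{G} + b^{\tau}(b^{\tau})^{T}$ with $b^{\tau} = \mathbf{e}_{u} - \tau \mathbf{e}_{v}$; the matrix determinant lemma combined with averaging over $\tau$ cancels the cross terms and produces $\det(xI - L^{\sigma}_{G}) - \det(xI - L^{\sigma}_{G}[V\setminus u]) - \det(xI - L^{\sigma}_{G}[V\setminus v])$, and a further average over $\sigma$ recovers the identity.

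Next I would show that each of $\mathscr{LM}^{G}(G - u, x)$ and $\mathscr{LM}^{G}(G - v, x)$ is real-rooted with roots interlacing those of $\mathscr{LM}(G, x)$. From the signed-graph derivation just given, $\mathscr{LM}^{G}(G - u, x)$ is the expectation over signings of $\det(xI - L^{\sigma}_{G}[V\setminus u])$, so for every signing $\sigma$, Cauchy's interlacing theorem for principal submatrices of a Hermitian matrix yields the desired interlacing at the level of individual signings. Promoting this pointwise interlacing to the averaged polynomials is the technical core: one can invoke the interlacing-family framework of Marcus--Spielman--Srivastava, using Mohammadian's real-rootedness of $\mathscr{LM}(G, x)$ to furnish a common interlacer for $\{\det(xI - L^{\sigma}_{G})\}_{\sigma}$, and then verify that restriction to a principal submatrix preserves a compatible common-interlacer structure for $\{\det(xI - L^{\sigma}_{G}[V\setminus u])\}_{\sigma}$.

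With this auxiliary interlacing in hand, let $\lambda_{1}(G) \geq \cdots \geq \lambda_{n}(G)$ denote the roots of $\mathscr{LM}(G, x)$. Both $\mathscr{LM}^{G}(G-u, x)$ and $\mathscr{LM}^{G}(G-v, x)$ are monic of degree $n - 1$ with interlacing roots, so each takes the common sign $(-1)^{i-1}$ at $\lambda_{i}(G)$; the identity of the first paragraph therefore gives $\operatorname{sign}(\mathscr{LM}(G+e, \lambda_{i}(G))) = (-1)^{i}$, which alternates with $i$. Since $\mathscr{LM}(G+e, x)$ is monic of degree $n$ and tends to $+\infty$ as $x \to +\infty$, the intermediate value theorem yields exactly one root of $\mathscr{LM}(G+e, x)$ strictly above $\lambda_{1}(G)$ and one root in each open interval $(\lambda_{i+1}(G), \lambda_{i}(G))$, accounting for all $n$ roots and establishing \eqref{interlace}; repeated roots of $\mathscr{LM}(G, x)$ are handled by a continuous perturbation argument (e.g., perturbing the degrees slightly and passing to the limit).

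The main obstacle is descending from per-signing Cauchy interlacing to the interlacing of the averaged polynomials in the second paragraph, since pointwise interlacing of polynomials is generally not preserved under convex combinations. A more satisfying resolution, if available, would be an explicit determinantal representation $\mathscr{LM}(G, x) = \det(xI - N_{G})$ for a single matrix $N_{G}$ (analogous to Godsil's path-tree construction for the ordinary matching polynomial) satisfying $\mathscr{LM}^{G}(G - u, x) = \det(xI - N_{G}[V\setminus u])$; Cauchy interlacing applied directly to $N_{G}$ would then yield the auxiliary interlacing immediately and bypass the interlacing-family machinery altogether.
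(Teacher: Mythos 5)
The paper itself offers no proof of this theorem --- it is quoted from \cite{WWM} --- so your proposal must stand on its own, and it does not: the step you yourself flag as ``the technical core'' is a genuine gap, not a technicality. Cauchy interlacing gives, for each \emph{fixed} signing $\sigma$, that the roots of $\det(xI-L^{\sigma}_{G}[V\setminus u])$ interlace those of $\det(xI-L^{\sigma}_{G})$, but interlacing is not preserved when you average the two families separately, and the Marcus--Spielman--Srivastava framework does not apply off the shelf here: it is designed to show that the roots of an average are bounded by the roots of some member of the family, whereas you need a statement relating \emph{two different averaged polynomials} (the expected characteristic polynomial and the expected principal minor), for which you would have to construct a compatible common-interlacer structure across both families simultaneously. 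You do not do this, and without it the sign claim in your third paragraph (that both minor polynomials take sign $(-1)^{i-1}$ at $\lambda_{i}(G)$) has no support, so the intermediate value argument never gets off the ground. Everything upstream of that point is correct: your edge-addition identity $\mathscr{LM}(G+e,x)=\mathscr{LM}(G,x)-\mathscr{LM}^{G}(G-u,x)-\mathscr{LM}^{G}(G-v,x)$ does hold (the term $\mathscr{LM}^{G}(G-u-v,x)$ created by the degree shift when both $u$ and $v$ are unmatched cancels exactly against the contribution of the matchings containing $e$), and the concluding sign analysis is standard modulo the usual care with repeated roots.

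The gap can be closed with tools the paper already uses, and this is essentially the route of \cite{WWM}. Theorem \ref{subdivision} gives $\mathscr{M}(S(G),x)=x^{m-n}\mathscr{LM}(G,x^{2})$, and the same computation yields $\mathscr{M}(S(G)-u,x)=x^{m-n+1}\mathscr{LM}^{G}(G-u,x^{2})$: deleting the original vertex $u$ from $S(G)$ leaves the subdivision vertices of the edges at $u$ as pendants on the neighbours of $u$, which is precisely what keeps the $G$-degrees, rather than the $(G-u)$-degrees, in the product. Godsil's classical theorem that the roots of $\mathscr{M}(H-w,x)$ interlace those of $\mathscr{M}(H,x)$ --- proved via the path-tree, i.e.\ exactly the ``explicit determinantal representation'' you wish for in your final paragraph --- applied to $H=S(G)$ and $w=u$ then delivers the auxiliary interlacing after restricting to nonnegative roots and substituting $x\mapsto\sqrt{x}$. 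Alternatively, you can bypass your identity entirely: expand $\mathscr{M}(S(G+e),x)$ at the new subdivision vertex of $e$ via Proposition \ref{exp}, as the paper does in \eqref{S(G+e)exp2}, apply Godsil's vertex-deletion interlacing in $S(G)$, and transport the conclusion back through $x\mapsto x^{2}$.
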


Note that a similar interlacing result (with respect to deleting or adding a vertex) holds for the matching polynomial $\mathscr{M}_G(x)$, see \cite[Corollary 6.1]{Godsil1}.

Using \eqref{LMdef}, it is not difficult to observe that the sum of all Laplacian matching roots of $G$ equals $\sum_{v \in V(G)}d(v)$. Thus, for any edge $e$ of the complement of $G$,
\begin{equation} \label{sum}
\sum_{i=1}^n \lambda_i(G+e)-\sum_{i=1}^n \lambda_i(G)=2.
\end{equation}
From \eqref{interlace} and \eqref{sum},  we deduce that by adding an edge to a graph, none of the Laplacian matching roots can decrease, and that the sum of those roots will increase by two. To investigate the integrality of Laplacian matching roots, in this paper, we only consider  the circumstances under which the addition of an edge to a graph will cause the Laplacian matching roots to change only by integer quantities. There are just two possible
cases that can happen as follows:
\begin{itemize}
	\item[(A)] one root of $\mathscr{LM}_G(x)$ increases by 2 and the other $n-1$ roots of $\mathscr{LM}_G(x)$ remain unchanged;
	\item [(B)] two roots of $\mathscr{LM}_G(x)$ increase by 1 and the other $n-2$ roots of $\mathscr{LM}_G(x)$ remain unchanged.
	\end{itemize}
We refer to (A) and (B) by saying the {\it Laplacian matching root integral variation} (abbreviated by LMRIV) occurs to $G$ in one place by  adding an edge and LMRIV occurs to $G$ in two places, respectively. 

In this paper, we study the Laplacian matching root integral variation LMRIV for connected graphs. We prove that LMRIV in one place is impossible and LMRIV in two places is also impossible if $\frac{g(G)}{c(G)}> \frac{7}{6}$, where $g(G)$ is the girth of $G$ and $c(G)$ is  the dimension of cycle space of $G$. We conjecture that LMRIV in two places is impossible for all connected graphs.

Our work is motivated by similar studies done on the eigenvalues of Laplacian matrix. So \cite{W} studied this type of roots variation for the Laplacian characteristic polynomial and characterized the graphs where the Laplacian spectral integral variation occurs in one place. Kirkland \cite{S} determined all graphs with Laplacian spectral integral variation occurring in two places. Another motivation for our work comes from the study of graphs whose matching polynomial has integer roots, see \cite{ACGKGN}.

\section{Preliminaries}

In this section, we collect some concepts and known results about the matching polynomial and the Laplacian matching polynomial for later use. Let $G$ be a graph of order $n$. If $S\subset V(G)$, then $G-S$  is the graph obtained from $G$ by deleting the vertices in $S$ together with all  edges incident to any vertex in $S$. The matching polynomial satisfies the following basic identity, which is called the expansion formula of $\mathscr{M}(G,x)$ at vertex $v$.
\begin{proposition}{\em \cite[Thm. 1.1.1(c)]{Godsil1}} \label{exp}
If $G$ is a graph and $v\in V(G)$, then
\begin{equation}
\mathscr{M}_{G}(x)=x\mathscr{M}_{G-\{v\}}(x)-\sum_{u\in N(v)}\mathscr{M}_{G-\{u,v\}}(x).
\end{equation}
\end{proposition}

The subdivision $S(G)$ of a graph $G$ is the graph obtained from $G$ by replacing every edge $e=\left\{a,b\right\}$ of $G$ with two edges $\left\{a,v_e\right\}$ and $\left\{b,v_e\right\}$ along with the new vertex $v_e$ corresponding to the edge $e$. The following result describes a connection between the matching polynomial and the Laplacian matching polynomial and is a useful tool to deal with the Laplacian matching roots of a graph.
\begin{theorem}{\em \cite{WWM, Yan, chen}} \label{subdivision}
If $G$ is a graph, then 
\begin{equation}
\mathscr{M}_{S(G)}(x)=x^{|E(G)|-|V(G)|}\mathscr{LM}_{G}(x^2).
\end{equation}
\end{theorem}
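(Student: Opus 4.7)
The plan is to compare the combinatorial expansions of the two sides and match them via a sign-reversing involution. Write $n=|V(G)|$ and $m=|E(G)|$.

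For the left-hand side, observe that $S(G)$ is bipartite with parts $V(G)$ and $\{v_e:e\in E(G)\}$, and every edge of $S(G)$ has the form $\{v,v_e\}$ with $v\in e$. Consequently each matching $M'\in\mathcal{M}(S(G))$ is uniquely encoded by a pair $(T,g)$, where $T:=V(M')\cap V(G)$ and $g:T\to E(G)$ is the injection defined by $\{v,v_{g(v)}\}\in M'$ (forcing $v\in g(v)$); conversely, any such injective $g$ yields a matching of size $|T|$. Hence
\[\mathscr{M}(S(G),x)=\sum_{(T,g)}(-1)^{|T|}\,x^{n+m-2|T|},\]
the sum running over pairs with $T\subseteq V(G)$, $g:T\to E(G)$ injective, and $g(v)\ni v$. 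For the right-hand side, starting from \eqref{LMdef}, I expand $\prod_{v\notin V(M)}(x^2-d(v))$ by first choosing the subset $S\subseteq V(G)\setminus V(M)$ where the term $-d(v)$ is selected, and then rewriting each $d(v)=\sum_{e\ni v}1$, so that $\prod_{v\in S}d(v)$ becomes a sum over \emph{arbitrary} functions $h:S\to E(G)$ with $h(v)\ni v$. Multiplying by $x^{m-n}$ yields
\[x^{m-n}\mathscr{LM}(G,x^2)=\sum_{(M,S,h)}(-1)^{|M|+|S|}\,x^{m+n-4|M|-2|S|},\]
summed over triples with $M\in\mathcal{M}(G)$, $S\subseteq V(G)\setminus V(M)$, and $h:S\to E(G)$ (not required to be injective) satisfying $h(v)\ni v$.

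Next I define a sign-reversing involution $\iota$ on these triples. Fix a linear order on $E(G)$ and call $e\in E(G)$ \emph{active} for $(M,S,h)$ if either $e\in M$, or $e=\{v_1,v_2\}$ is a \emph{collision edge}, meaning $v_1,v_2\in S$ and $h(v_1)=h(v_2)=e$; note that $h(v_i)\ni v_i$ forces the common value to be $\{v_1,v_2\}$, so collision edges exist precisely when $h$ fails to be injective. If $(M,S,h)$ has an active edge, let $e=\{v_1,v_2\}$ be the smallest; define
\begin{itemize}
\item if $e\in M$: $\iota(M,S,h)=(M\setminus\{e\},\,S\cup\{v_1,v_2\},\,h')$, where $h'(v_i)=e$ and $h'=h$ on $S$;
\item if $e$ is a collision edge: $\iota(M,S,h)=(M\cup\{e\},\,S\setminus\{v_1,v_2\},\,h|_{S\setminus\{v_1,v_2\}})$.
\end{itemize}

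I then verify that each branch produces a valid triple (using $V(M)\cap S=\emptyset$), that $\iota$ leaves the set of active edges invariant (so the ``smallest active edge'' is identified identically for input and output, giving $\iota^2=\mathrm{id}$), that $|M|+|S|$ changes parity while $4|M|+2|S|$ is preserved (making $\iota$ sign-reversing and exponent-preserving), and that the fixed points of $\iota$ are precisely the triples with $M=\emptyset$ and $h$ injective. The fixed-point contribution reproduces $\sum_{(T,h)}(-1)^{|T|}x^{n+m-2|T|}$ over $T\subseteq V(G)$ and injective $h:T\to E(G)$ with $h(v)\ni v$, which by the opening bijection equals $\mathscr{M}(S(G),x)$. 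The main technical obstacle is the verification that the active-edge set is preserved by $\iota$; once this is in hand, the sign-reversing cancellation immediately delivers the identity.
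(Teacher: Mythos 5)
Your argument is correct and complete. Note that the paper itself offers no proof of this statement: Theorem \ref{subdivision} is quoted from \cite{WWM, Yan, chen}, so the only meaningful comparison is with the proofs in those references, which typically proceed either by repeatedly applying the vertex expansion formula (Proposition \ref{exp}) at the subdivision vertices $v_e$ to set up a recursion, or via the incidence-matrix/signed-graph averaging that motivated the definition of $\mathscr{LM}$. Your route is genuinely different and self-contained: the bijection between matchings of $S(G)$ and pairs $(T,g)$ with $g:T\to E(G)$ injective and $g(v)\ni v$ is exactly right (injectivity encodes that each $v_e$ is matched at most once), the expansion of $\prod_{v\notin V(M)}(x^2-d(v))$ into triples $(M,S,h)$ with $h$ arbitrary is a standard inclusion of $d(v)=\sum_{e\ni v}1$, and the sign-reversing involution toggling the smallest active edge between ``matching edge of $M$'' and ``collision edge of $h$'' does preserve the active-edge set (the new values $h'(v_1)=h'(v_2)=e$ cannot create a collision at any $f\neq e$, and conversely $v_1,v_2$ participate in no collision other than the one at $e$), is exponent-preserving since $4|M|+2|S|$ changes by $-4+4=0$, and has exactly the injective-$h$, $M=\emptyset$ triples as fixed points. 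What your approach buys is a purely combinatorial, reference-free verification of the identity; what the recursive proof buys is brevity and reuse of machinery (Proposition \ref{exp}) already present in the paper. Either is acceptable; yours would stand on its own if the editors wanted the paper self-contained.
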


Another useful result due to Zhang and Chen \cite{chen} gives a combinatorial interpretation for the coefficients of the Laplacian matching polynomial and is an analogue of the Sachs coefficient theorem for the characteristic polynomial of a graph, see \cite[Corollary 2.3.3]{CRS} or \cite{Sachs}. A {\em TU-subgraph} of $G$ is a subgraph whose components are trees or unicyclic graphs.
Suppose that a TU-subgraph $H$ of $G$ consists of $s$ unicyclic graphs and trees $T_1,\ldots,T_t$.
The {\em weight} of $H$ is defined as
$$\omega(H)=2^s\prod_{i=1}^{t}|T_i|,$$
where $|T_i|$ is the order of $T_i$.
\begin{theorem}{\em \cite{chen}}  \label{cominter}
If $G$ is a graph of order $n$ with Laplacian matching polynomial $\mathscr{LM}_{G}(x)=\sum_{i=0}^{n}(-1)^ib_ix^{n-i}$, then
\begin{equation}
b_i=\omega(\mathscr{H}_i)=\sum_{H\in \mathscr{H}_i}\omega(H),
\end{equation}
for $i=1,2,\ldots,n$, where $\mathscr{H}_i$ denotes the set of all the TU-subgraphs of $G$ with $i$ edges.
\end{theorem}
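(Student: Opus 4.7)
The plan is to reduce Theorem \ref{cominter} to a combinatorial count for the matchings of the subdivision $S(G)$ via Theorem \ref{subdivision}. Expanding the identity $\mathscr{M}(S(G),x)=x^{|E(G)|-|V(G)|}\mathscr{LM}(G,x^2)$ as a polynomial on each side and comparing the coefficient of $x^{|V(G)|+|E(G)|-2i}$ (using $|V(S(G))|=|V(G)|+|E(G)|$) yields $b_i=\phi_i(S(G))$ for every $i$. It therefore suffices to establish the purely combinatorial identity $\phi_i(S(G))=\omega(\mathscr{H}_i)$.

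To count $\phi_i(S(G))$, I would encode each $i$-matching of $S(G)$ as a pair $(F,\sigma)$, with $F\subseteq E(G)$, $|F|=i$, and $\sigma$ an orientation of the edges of $F$. Every edge of $S(G)$ has the form $\{v,v_e\}$ for some $v\in e$, and the two edges incident to a common subdivision vertex $v_e$ cannot both lie in a matching. Hence an $i$-matching $M$ is determined by the set $F$ of original edges $e\in E(G)$ whose subdivision vertex is covered by $M$, together with a choice, for each $e\in F$, of which endpoint of $e$ is matched with $v_e$. Declaring this chosen endpoint to be the tail of $e$ in $\sigma$, the condition that $M$ is a matching translates exactly into the constraint that under $\sigma$ every vertex of $G$ has out-degree at most $1$.

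The count of such orientations splits across components of $(V(G),F)$. A component with $k$ vertices and $\ell$ edges satisfies $\ell=\sum\mathrm{outdeg}\le k$, so, being connected, it is either a tree ($\ell=k-1$) or unicyclic ($\ell=k$); otherwise the count is $0$. For a tree $T$ on $k$ vertices, exactly one vertex has out-degree $0$ and the rest have out-degree $1$, so choosing this ``sink'' uniquely determines the orientation (all edges point toward it), giving $|V(T)|$ valid orientations. For a unicyclic component, every vertex has out-degree exactly $1$; a leaf-stripping argument then forces every tree branch attached to the unique cycle to be oriented with each edge pointing inward toward the cycle, which in turn forces the cycle itself to be consistently oriented as a directed cycle in one of exactly $2$ ways. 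Multiplying the component contributions shows that the number of valid orientations of $F$ equals $\omega(F)$ when $F$ is a TU-subgraph and $0$ otherwise; summing over $F$ with $|F|=i$ gives $\phi_i(S(G))=\omega(\mathscr{H}_i)$, which together with the reduction above completes the proof.

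The one genuinely delicate step is the orientation count on a unicyclic component: one must do the leaf-stripping carefully (including the degenerate case of a component that is a bare cycle with no pendants) to see that the cycle's orientation is forced to be one of exactly two directed cycles, and not accidentally undercount or overcount. Everything else is essentially bookkeeping once this bijection and its degree constraint are in place.
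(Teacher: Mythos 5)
The paper does not prove Theorem \ref{cominter} at all --- it is quoted from Zhang and Chen \cite{chen} --- so there is no in-paper argument to compare against; judged on its own, your proof is correct and complete. The reduction is exactly the right use of Theorem \ref{subdivision}: since $S(G)$ has $|V(G)|+|E(G)|$ vertices, matching the coefficient of $x^{|V(G)|+|E(G)|-2i}$ in $\mathscr{M}(S(G),x)=x^{|E(G)|-|V(G)|}\mathscr{LM}(G,x^2)$ gives $b_i=\phi_i(S(G))$, and your encoding of an $i$-matching of $S(G)$ as an edge set $F\subseteq E(G)$ with an orientation of out-degree at most $1$ at every vertex of $G$ is a genuine bijection (each edge of $S(G)$ contains exactly one subdivision vertex, and the matching condition at original vertices is precisely the out-degree constraint). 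The component counts are right: $\ell\le k$ forces tree or unicyclic components, a tree on $k$ vertices admits exactly $k$ such orientations (one per choice of the unique out-degree-$0$ vertex, toward which all edges must point, since the functional digraph of the out-edges cannot cycle in a tree), and a unicyclic component admits exactly $2$ (the pendant trees are forced toward the cycle and the cycle must be one of its two coherent orientations, including the bare-cycle case). This reproduces $\omega(H)=2^s\prod_i|T_i|$ and hence $\phi_i(S(G))=\omega(\mathscr{H}_i)$. The only point worth making explicit is a convention you use implicitly: your sum runs over \emph{edge sets} $F$ with $|F|=i$, so you are identifying a TU-subgraph in $\mathscr{H}_i$ with its edge set (equivalently, discounting isolated vertices, which contribute a harmless factor $|T_i|=1$ to the weight but would cause double counting if subgraphs differing only in isolated vertices were listed separately); this matches the intended reading of \cite{chen} and is the same convention the paper relies on later when it partitions $\mathscr{H}(G)$ by vertex partitions. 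Note also that this route gives more than claimed: it yields $\phi_i(S(G))=b_i$ for all $i$, which is an alternative derivation of the combinatorial content behind Theorem \ref{subdivision} itself rather than merely an application of it.
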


The following three theorems from \cite{WWM, chen} will be used in the later section.  
\begin{theorem}{\em \cite{chen}} \label{zero}
Let $G$ be a connected graph. The graph $G$ is a tree if and only if $\lambda_n(G)=0$.
\end{theorem}

\begin{theorem}{\em \cite{WWM}} \label{larroot1}
If $G$ is a connected graph, then
\begin{equation}
\lambda_1(G)\geq\Delta(G)+1.
\end{equation}
Equality holds if and only if $G$ is a star.
\end{theorem}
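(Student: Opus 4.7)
The plan is to translate the problem into one about the ordinary matching polynomial of the subdivision $S(G)$ and then exploit interlacing against a carefully chosen induced substructure. By Theorem~\ref{subdivision}, $\mathscr{M}(S(G),x) = x^{|E(G)|-|V(G)|}\mathscr{LM}(G,x^2)$, so the largest root of $\mathscr{M}(S(G),x)$ is exactly $\sqrt{\lambda_1(G)}$. Hence the theorem reduces to showing that the largest matching root of $S(G)$ is at least $\sqrt{\Delta(G)+1}$, with equality only when $G$ is a star.

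For the lower bound, I would pick a vertex $v$ of degree $\Delta=\Delta(G)$ with neighbors $w_1,\ldots,w_\Delta$ in $G$, and let $u_i$ be the subdivision vertex sitting on the edge $vw_i$. Because $S(G)$ is bipartite with the original and subdivision vertices on opposite sides, the set $\{v,u_1,\ldots,u_\Delta,w_1,\ldots,w_\Delta\}$ induces precisely a copy of $S(K_{1,\Delta})$ inside $S(G)$. A direct expansion via~\eqref{LMdef} for the star, in which only the empty matching and the $\Delta$ single-edge matchings contribute, gives
\[
\mathscr{LM}(K_{1,\Delta},x) = (x-\Delta)(x-1)^\Delta - \Delta(x-1)^{\Delta-1} = x(x-1)^{\Delta-1}(x-\Delta-1),
\]
so Theorem~\ref{subdivision} yields $\mathscr{M}(S(K_{1,\Delta}),x) = x(x^2-1)^{\Delta-1}(x^2-\Delta-1)$, whose largest root is $\sqrt{\Delta+1}$. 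The desired inequality $\lambda_1(G)\geq \Delta(G)+1$ then follows from Godsil's interlacing theorem for matching polynomials~\cite{Godsil1}, according to which the largest matching root of an induced subgraph of any host graph never exceeds that of the host.

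The forward direction of the equality case is visible directly from the factorization above, namely $\lambda_1(K_{1,n-1}) = n = \Delta(K_{1,n-1})+1$. For the converse, suppose $G$ is connected with $\lambda_1(G) = \Delta(G)+1$ but $G\neq K_{1,\Delta}$; then $S(K_{1,\Delta})$ is a \emph{proper} induced subgraph of the connected graph $S(G)$, and one expects the interlacing of the previous paragraph to be strict, contradicting $\lambda_1(G)=\Delta(G)+1$. Pinning down this strict inequality is the main obstacle, because when $G$ has a cycle $S(G)$ is not a tree and so one cannot invoke Perron--Frobenius on the adjacency matrix directly. I would close this gap either by appealing to the Heilmann--Lieb/Godsil simplicity of the largest matching root of a connected graph, or by peeling off one vertex of $S(G)\setminus S(K_{1,\Delta})$ at a time via Proposition~\ref{exp} and tracking strict positivity of $\mathscr{M}$ at the current largest root to propagate strict monotonicity up to $S(G)$.
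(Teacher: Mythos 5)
This theorem is quoted in the paper from \cite{WWM} without proof, so there is no in-paper argument to compare against; I can only assess your proposal on its own terms. Your lower-bound argument is correct and complete: the reduction via Theorem~\ref{subdivision} is sound (the roots of $\mathscr{M}(S(G),x)$ are $0$ together with $\pm\sqrt{\lambda_\ell(G)}$, so its largest root is $\sqrt{\lambda_1(G)}$), your factorization $\mathscr{LM}(K_{1,\Delta},x)=x(x-1)^{\Delta-1}(x-\Delta-1)$ checks out, the chosen vertex set does induce exactly $S(K_{1,\Delta})$ in $S(G)$ (no stray adjacencies, since a subdivision vertex $u_i$ is adjacent only to $v$ and $w_i$, and any edge $w_iw_j$ of $G$ is itself subdivided by a vertex outside your set), and iterated vertex-deletion interlacing for matching polynomials gives the monotonicity of the largest root under passing to induced subgraphs.

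The one genuine gap is the strictness needed for the equality case, which you correctly flag. The precise fact you need is Godsil's theorem (proved via the path tree; see \cite{Godsil1}) that for a \emph{connected} graph $H$ and any vertex $u$, the largest root of $\mathscr{M}(H\setminus u,x)$ is \emph{strictly} less than that of $\mathscr{M}(H,x)$. With this in hand you do not need to ``peel off one vertex at a time and track strict positivity'': a single application suffices. If $G$ is connected with $\lambda_1(G)=\Delta+1$ and $G\neq K_{1,\Delta}$, then $S(G)$ is connected and has at least one vertex $u$ outside the copy of $S(K_{1,\Delta})$; then $\sqrt{\Delta+1}\leq$ (largest root of $\mathscr{M}(S(K_{1,\Delta}))$) $\leq$ (largest root of $\mathscr{M}(S(G)\setminus u)$) $<$ (largest root of $\mathscr{M}(S(G))$) $=\sqrt{\lambda_1(G)}$, where the middle inequality is ordinary (non-strict) interlacing and only the last step uses connectivity of $S(G)$. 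Your alternative suggestion of invoking ``simplicity of the largest matching root'' alone would not be enough --- simplicity of a root does not by itself yield strict decrease under vertex deletion --- so you should cite the strict-deletion form of Godsil's result explicitly. With that single citation the proof is complete.
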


\begin{theorem}{\em \cite{WWM}}\label{larroot2}
If $G$ is a connected graph and $e$ is an edge of its complement, then $\lambda_1(G+e)$ has multiplicity one and $\lambda_1(G+e)>\lambda_1(G)$.
\end{theorem}

\section{Main results}

The purpose of this section is to investigate the LMRIV. We prove first that LMRIV in one place is impossible. In what follows, we assume that $G$ is a connected graph with $V(G)=\left\{v_1,\ldots,v_n\right\}$ and $E(G)=\left\{e_1,\ldots,e_m\right\}$,  and always use $R(G)=\left\{\lambda_1,\ldots,\lambda_n\right\}$ to denote  the multiset of the Laplacian matching roots of $G$, where $\lambda_1 \geq \cdots \geq \lambda_n \geq 0.$
\begin{theorem}\label{LMRIV1}
The LMRIV will not occur in one place.
\end{theorem}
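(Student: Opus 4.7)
The plan is to suppose, toward a contradiction, that LMRIV in one place holds for $G$ and $e=\{a,b\} \notin E(G)$. By Theorem~\ref{larroot2}, $\lambda_1(G+e)$ is simple and strictly greater than $\lambda_1(G)$, so the unique shifted root $\lambda_k(G)$ of $\mathscr{LM}(G,x)$ must become the new $\lambda_1(G+e)$; by~\eqref{sum} it shifts by exactly $2$. This yields the polynomial identity
\[
\mathscr{LM}(G+e,x) - \mathscr{LM}(G,x) \;=\; -2\prod_{i\neq k}\bigl(x-\lambda_i(G)\bigr),
\qquad \lambda_1(G+e)=\lambda_k(G)+2.
\]

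Next I would match the coefficient of $x^{n-2}$ on both sides. Using $\sum_i\lambda_i(G)=2m$, the right-hand side contributes $2\sum_{i\neq k}\lambda_i(G)=4m-2\lambda_k(G)$. On the left, writing $\mathscr{LM}(G,x)=\sum_i(-1)^i b_i(G)x^{n-i}$, the coefficient is $b_2(G+e)-b_2(G)$, and by Theorem~\ref{cominter} this equals the total TU-weight of $2$-edge TU-subgraphs of $G+e$ containing $e$. Any such subgraph is $e$ together with one further edge $e'$: when $e'$ shares an endpoint with $e$ the active component is a $P_3$ of weight $3$ (giving $d_G(a)+d_G(b)$ choices of $e'$), and when $e'$ is disjoint from $e$ the active component is $2K_2$ of weight $4$ (giving $m-d_G(a)-d_G(b)$ choices). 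Summing,
\[
b_2(G+e)-b_2(G)=3\bigl(d_G(a)+d_G(b)\bigr)+4\bigl(m-d_G(a)-d_G(b)\bigr)=4m-d_G(a)-d_G(b),
\]
so $\lambda_k(G)=\tfrac12\bigl(d_G(a)+d_G(b)\bigr)$ and hence $\lambda_1(G+e)=\tfrac12\bigl(d_{G+e}(a)+d_{G+e}(b)\bigr)+1$.

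The final step applies Theorem~\ref{larroot1} to $G+e$. Since $\Delta(G+e)\geq \max\bigl(d_{G+e}(a),d_{G+e}(b)\bigr)\geq\tfrac12\bigl(d_{G+e}(a)+d_{G+e}(b)\bigr)$,
\[
\lambda_1(G+e) \;\geq\; \Delta(G+e)+1 \;\geq\; \frac{d_{G+e}(a)+d_{G+e}(b)}{2}+1,
\]
and the formula for $\lambda_1(G+e)$ just obtained forces equality throughout. The outer equality forces $G+e$ to be a star by the equality case of Theorem~\ref{larroot1}, while the inner equalities force $d_{G+e}(a)=d_{G+e}(b)=\Delta(G+e)$, so both $a$ and $b$ attain the maximum degree of $G+e$. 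Since $n\geq 3$ (for $n=2$ the only connected graph is $K_2$, which has no non-edge), the star $K_{1,n-1}$ has a unique maximum-degree vertex, forcing $a=b$--a contradiction.

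The main technical hurdle is the TU-weight count in the second step: one must carefully track the component weights $P_3\mapsto 3$ and $2K_2\mapsto 4$ (recalling that isolated vertices appear as size-$1$ tree components of weight $1$) and use $\{a,b\}\notin E(G)$ to enumerate edges through $a$ or $b$ without double-counting. Everything else is a direct assembly of the preliminaries, in particular the equality case of Theorem~\ref{larroot1}.
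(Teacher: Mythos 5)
Your proof is correct, and it reaches the crucial identity $2\lambda_k=d(v_i)+d(v_j)$ by a genuinely different route from the paper. The paper passes to the subdivision graph: it uses Theorem \ref{subdivision} to encode the root data in $\mathscr{M}(S(G),x)$, expands $\mathscr{M}(S(G+e),x)$ at the new subdivision vertex via Proposition \ref{exp}, and compares the coefficient of $x^{m+n-3}$ (an edge count of $S(G)-v_i$ and $S(G)-v_j$) to get $d(v_i)+d(v_j)=2\lambda_1$, which contradicts Theorem \ref{larroot1} applied to $G$. You instead stay with $\mathscr{LM}$ itself and compute the coefficient of $x^{n-2}$ in $\mathscr{LM}(G+e,x)-\mathscr{LM}(G,x)$ through the TU-subgraph formula of Theorem \ref{cominter}; your weight count ($P_3\mapsto 3$, $2K_2\mapsto 4$, using $v_iv_j\notin E(G)$ to avoid double-counting) is right and yields the same linear relation. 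Your endgame is slightly longer but also slightly more careful: the paper silently assumes the shifted root is $\lambda_1(G)$ (interlacing alone does not rule out some $\lambda_k$ with $k>1$ jumping above $\lambda_1(G)$), whereas you only use $\lambda_1(G+e)=\lambda_k(G)+2$, apply Theorem \ref{larroot1} to $G+e$, and close with its equality case (the star characterization) to force $v_i=v_j$. In short: same coefficient-comparison strategy at the level of the second elementary symmetric function, but your version trades the subdivision-graph machinery for the combinatorial TU-subgraph interpretation and patches a small imprecision in the paper's write-up at the cost of an extra equality analysis.
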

\begin{proof} We use proof by contradiction. Assume that LMRIV occurs to $G$ in one place by adding a new edge $e=v_iv_j$. By Theorem \ref{larroot2}, the largest Laplacian matching root must have changed by 2, and therefore,  $R(G+e)=\left\{\lambda_1+2,\lambda_2,\ldots,\lambda_n\right\}$.
Theorem \ref{subdivision} implies that
\begin{equation}
\mathscr{M}_{S(G)}(x)=x^{m-n}\mathscr{LM}_{G}(x^2)=x^{m-n}\prod_{\ell=1}^n(x^2-\lambda_\ell)
\end{equation}
and
\begin{equation}  \label{S(G+e)exp1}
\begin{split}
  \mathscr{M}_{S(G+e)}(x)& =x^{m-n+1}\mathscr{LM}_{G+e}(x^2) \\
  &=x^{m-n+1}(x^2-\lambda_1-2)\prod_{\ell=2}^n(x^2-\lambda_\ell)\\
  &=x\mathscr{M}_{S(G)}(x)-2x^{m-n+1}\prod_{\ell=2}^n(x^2-\lambda_\ell).
\end{split}
\end{equation}
One the other hand, by Proposition \ref{exp}, we have that
\begin{equation} \label{S(G+e)exp2}
  \mathscr{M}_{S(G+e)}(x)=x\mathscr{M}_{S(G)}(x)-\mathscr{M}_{S(G)-v_i}(x)-\mathscr{M}_{S(G)-v_j}(x).
\end{equation}
Combining \eqref{S(G+e)exp1} and \eqref{S(G+e)exp2}, we deduce that
\begin{equation} \label{S(G+e)exp3}
\mathscr{M}_{S(G)-v_i}(x)+\mathscr{M}_{S(G)-v_j}(x)=2x^{m-n+1}\prod_{\ell=2}^n(x^2-\lambda_\ell).
\end{equation}
By comparing the coefficient of $x^{m+n-3}$ on two sides of \eqref{S(G+e)exp3}, we observe that
$$4m-d(v_i)-d(v_j)=2\sum_{\ell=2}^n\lambda_\ell=2(2m-\lambda_1),$$
which implies that  $d(v_i)+d(v_j)=2\lambda_1$. This contradicts Theorem \ref{larroot1}, completing the proof.
\end{proof}

Now, we focus on considering the case in which the LMRIV occurs to $G$ in two places.  By Theorem \ref{larroot2}, the largest matching root must be changed. In what follows, we always denote another changed root by $\lambda_k$.

\begin{theorem}\label{LMRIV2}
If the $LMRIV$ occurs to $G$ in two places by adding a new edge $e=v_iv_j$ and the changed roots of $G$ are $\lambda_1$ and $\lambda_k$, then
$$\lambda_1+\lambda_k=d(v_i)+d(v_j)+1,$$
$$\lambda_1\lambda_k=d(v_i)d(v_j).$$
\end{theorem}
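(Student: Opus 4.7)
\medskip
\noindent\textbf{Proof plan.} The approach mirrors the proof of Theorem~\ref{LMRIV1}: build a polynomial identity using Proposition~\ref{exp} and Theorem~\ref{subdivision}, and then extract two scalar relations by comparing coefficients of two carefully chosen powers of $x$.

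First I would fix $R(G+e)=\{\lambda_1+1,\lambda_2,\ldots,\lambda_{k-1},\lambda_k+1,\lambda_{k+1},\ldots,\lambda_n\}$ and write $P(y)=\prod_{\ell\neq 1,k}(y-\lambda_\ell)$. Applying Proposition~\ref{exp} at the new subdivision vertex $v_e$ of $S(G+e)$ (so that $S(G+e)-v_e=S(G)$ and $S(G+e)-v_e-v_i=S(G)-v_i$, and similarly for $v_j$), and invoking Theorem~\ref{subdivision} for both $G$ and $G+e$, yields, exactly in parallel with the derivation of \eqref{S(G+e)exp3}, the polynomial identity
\begin{equation}\label{mainid}
\mathscr{M}(S(G)-v_i,x)+\mathscr{M}(S(G)-v_j,x)=x^{m-n+1}P(x^2)\bigl[2x^2-(\lambda_1+\lambda_k+1)\bigr].
\end{equation}

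Comparing the coefficient of $x^{m+n-3}$ on both sides of \eqref{mainid}---the left side contributes $-(\phi_1(S(G)-v_i)+\phi_1(S(G)-v_j))=-(4m-d(v_i)-d(v_j))$, and the right side contributes $-2(2m-\lambda_1-\lambda_k)-(\lambda_1+\lambda_k+1)$---gives the first relation $\lambda_1+\lambda_k=d(v_i)+d(v_j)+1$.

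For the product relation I would compare coefficients of $x^{m+n-5}$ in \eqref{mainid}. The left side involves $\phi_2(S(G)-v_i)+\phi_2(S(G)-v_j)$, which I evaluate via the standard formula $\phi_2(H)=\binom{|E(H)|}{2}-\sum_{v\in V(H)}\binom{d_H(v)}{2}$ together with the degree structure of $S(G)$: original vertices keep their $G$-degrees, subdivision vertices have degree $2$ in $S(G)$, and those subdivision vertices adjacent to the removed original vertex drop to degree $1$. The right side contributes $2e_2(\lambda_\ell:\ell\neq 1,k)+(\lambda_1+\lambda_k+1)\sum_{\ell\neq 1,k}\lambda_\ell$, which I would rewrite in terms of $b_2=e_2(G)$, $\lambda_1+\lambda_k$, and $\lambda_1\lambda_k$. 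Using Theorem~\ref{cominter}, enumerating the two species of $2$-edge TU-subgraphs (disjoint edge pairs contribute weight $4$ each; shared-endpoint pairs contribute weight $3$ each) produces the clean formula $b_2=2m(m-1)-D$ with $D=\sum_{v\in V(G)}\binom{d(v)}{2}$. Substituting the sum relation $\lambda_1+\lambda_k=d(v_i)+d(v_j)+1$ already proved, the $m$-, $D$-, and $(d(v_i)+d(v_j))^2$-terms should cancel, leaving precisely $\lambda_1\lambda_k=d(v_i)d(v_j)$.

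The main obstacle is the bookkeeping in the $x^{m+n-5}$ comparison: two binomial-heavy $\phi_2$ expressions on the left have to be matched against a quadratic-in-$(\lambda_1+\lambda_k)$ expression on the right, and the desired cancellations occur only after both the sum relation and the TU-subgraph identity $b_2=2m(m-1)-D$ are applied. There is no conceptually new ingredient beyond a careful expansion.
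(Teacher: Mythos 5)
Your proposal is correct and follows essentially the same route as the paper: the same subdivision-vertex expansion yielding the identity \eqref{twoS(G+e)exp3}, the same comparison of the $x^{m+n-3}$ and $x^{m+n-5}$ coefficients, and the same reduction to symmetric functions of the roots. The only (cosmetic) difference is that you evaluate $\phi_2(S(G)-v_i)$ via the closed formula $\binom{|E|}{2}-\sum_v\binom{d(v)}{2}$ and the TU-subgraph count $b_2=2m(m-1)-D$, whereas the paper uses the difference identity $\phi_2(S(G))-\phi_2(S(G)-u)=(2m-d(u)-1)d(u)$ together with $\phi_2(S(G))=\sum_{s<t}\lambda_s\lambda_t$; the two computations agree.
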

\begin{proof}
Write $R(G+e)=\left\{\lambda_1+1,\lambda_2,\ldots,\lambda_{k-1},\lambda_k+1,\lambda_{k+1},\ldots,\lambda_n\right\}$. By Theorem 2.2, we get that
\begin{equation} \label{S(G)}
\mathscr{M}_{S(G)}(x)=x^{m-n}\mathscr{LM}_{G}(x^2)=x^{m-n}\prod_{\ell=1}^n(x^2-\lambda_\ell)
\end{equation}
and
\begin{equation} \label{twoS(G+e)exp1}
\begin{split}
 \mathscr{M}_{S(G+e)}(x)=& x^{m-n+1}\mathscr{LM}_{G+e}(x^2)\\
 =&x^{m-n+1}(x^2-\lambda_1-1)(x^2-\lambda_k-1)\prod_{\ell\neq 1,k}(x^2-\lambda_\ell)\\
 =&x\mathscr{M}_{S(G)}(x)-x^{m-n+1}(x^2-\lambda_1)\prod_{\ell\neq 1,k}(x^2-\lambda_\ell)\\
 &-x^{m-n+1}(x^2-\lambda_k)\prod_{\ell\neq 1,k}(x^2-\lambda_\ell)+x^{m-n+1}\prod_{\ell\neq 1,k}(x^2-\lambda_\ell).
\end{split}
\end{equation}
In addition, it follows from  Proposition \ref{exp} that
\begin{equation} \label{twoS(G+e)exp2}
\mathscr{M}_{S(G+e)}(x)=x\mathscr{M}_{S(G)}(x)-\mathscr{M}_{S(G)-v_i}(x)-\mathscr{M}_{S(G)-v_j}(x).
\end{equation}
Combining the \eqref{twoS(G+e)exp1} and \eqref{twoS(G+e)exp2}, one can deduce that
\begin{equation} \label{twoS(G+e)exp3}
\begin{split}
&\mathscr{M}_{S(G)-v_i}(x)+\mathscr{M}_{S(G)-v_j}(x)\\
=&x^{m-n+1}(x^2-\lambda_1)\prod_{\ell\neq 1,k}(x^2-\lambda_\ell)+x^{m-n+1}(x^2-\lambda_k)\prod_{\ell\neq 1,k}(x^2-\lambda_\ell)\\
&-x^{m-n+1}\prod_{\ell\neq 1,k}(x^2-\lambda_\ell)\\
=&(2x^2-\lambda_1-\lambda_k-1)x^{m-n+1}\prod_{\ell\neq 1,k}(x^2-\lambda_{\ell}).
\end{split}
\end{equation}
Note that both $S(G)-v_i$ and  $S(G)-v_j$ contain $m+n-1$ vertices, and contain $2m-d(v_i)$ and $2m-d(v_j)$ edges, respectively.
Further, by comparing the coefficients of $x^{m+n-3}$ on two sides of \eqref{twoS(G+e)exp3}, we observe that
$$4m-d(v_i)-d(v_j)=\lambda_1+\lambda_k+1+2\sum_{\ell \neq 1, k} \lambda_{\ell}=4m-\lambda_1-\lambda_k+1,$$
which implies that
$$\lambda_1+\lambda_k=d(v_i)+d(v_j)+1.$$

Now, we are going to prove the second statement. Recall that  $\phi_2(G)$ denote the number of the $2$-matchings in $G$. For any $u\in V(G)$, it is clear that
\begin{equation}\label{2-matching number1}
\phi_2(S(G))-\phi_2(S(G)- u)=(2m-d(u)-1)d(u).
\end{equation}
By comparing the coefficients of $x^{m+n-4}$ on two sides of \eqref{S(G)}, we observe that
 \begin{equation}\label{2-matching number2}
 \phi_2(S(G))=\sum_{1\leq s<t\leq n}\lambda_s\lambda_t.
 \end{equation}
For $v_i,v_j\in V(G)$, combining \eqref{2-matching number1} and \eqref{2-matching number2}, we have
 $$\phi_2(S(G)- v_i)=\sum_{1\leq s<t\leq n}\lambda_s\lambda_t-(2m-d(v_i)-1)d(v_i)$$
 and
 $$\phi_2(S(G)- v_j)=\sum_{1\leq s<t\leq n}\lambda_s\lambda_t-(2m-d(v_j)-1)d(v_j).$$
 Hence, writing $\phi(v_i,v_j)=\phi_2(S(G)-v_i)+\phi_2(S(G)-v_j)$, we have
\begin{equation}\label{2-matching number3}
 \phi(v_i,v_j)=2\sum_{1\leq s<t\leq n}\lambda_s\lambda_t-(2m-d(v_i)-1)d(v_i)-(2m-d(v_j)-1)d(v_j).
\end{equation}
By comparing the coefficients of $x^{m+n-5}$ on two sides of \eqref{twoS(G+e)exp3}, we have
\begin{equation}\label{coefficient1}
  \phi(v_i,v_j)=2\sum_{s,t\neq 1,k}\lambda_s\lambda_t+(\lambda_1+\lambda_k+1)\sum_{\ell \neq 1,k}\lambda_{\ell}.
\end{equation}
Therefore, it follows from  \eqref{2-matching number3} and \eqref{coefficient1}  that
\begin{equation}\label{coefficient2}
(\lambda_1+\lambda_k-1)\sum_{ \ell \neq 1,k}\lambda_{\ell}+2\lambda_1\lambda_k=2m(d(v_i)+d(v_j))-(d^2(v_i)+d^2(v_j)+d(v_i)+d(v_j)).
\end{equation}
Noting that $\lambda_1+\lambda_k=d(v_i)+d(v_j)+1$ and  $\sum \limits_{\ell \neq 1,k}\lambda_{\ell}=2m-(\lambda_1+\lambda_k) $, we can deduce that
$$\lambda_1\lambda_k=d(v_i)d(v_j),$$
as desired.
\end{proof}

As applications of Theorem \ref{LMRIV2}, in the following two consequences, we give some sufficient conditions for the LMRIV not occurring in two places.
\begin{corollary} \label{tree}
For each tree $T$, the LMRIV will not occur in two places.
\end{corollary}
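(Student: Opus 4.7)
The plan is to derive a contradiction directly from the two identities established in Theorem \ref{LMRIV2}, using the characterization of trees supplied by Theorem \ref{zero}. Suppose for contradiction that LMRIV occurs in two places when some edge $e = v_iv_j$ is added to a tree $T$ on $n \geq 2$ vertices (the case $n=1$ is vacuous because no edge can be added), and let the two roots of $\mathscr{LM}(T,x)$ that increase by one be $\lambda_1$ and $\lambda_k$, exactly in the notation of Theorem \ref{LMRIV2}. The crucial geometric observation is that $T+e$ is still connected but has $n$ edges on $n$ vertices, hence contains a cycle and is therefore not a tree.

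I would then split into two cases depending on whether the changed index $k$ equals $n$ or not. If $k < n$, then $\lambda_n$ is among the $n-2$ roots held fixed, so $\lambda_n(T+e) = \lambda_n(T) = 0$ by Theorem \ref{zero}; but Theorem \ref{zero} applied to $T+e$, which is connected and not a tree, instead forces $\lambda_n(T+e) > 0$, a direct contradiction. If on the other hand $k = n$, then $\lambda_k = \lambda_n(T) = 0$, and substituting into the product identity $\lambda_1 \lambda_k = d(v_i)\, d(v_j)$ from Theorem \ref{LMRIV2} yields $d(v_i)\, d(v_j) = 0$. This is impossible, since every vertex of a connected graph on at least two vertices has positive degree. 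Either way we reach a contradiction, completing the argument.

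I do not expect any genuine obstacle in executing this plan: the whole proof rides on the biconditional ``$G$ is a tree $\Leftrightarrow \lambda_n(G) = 0$'' from Theorem \ref{zero} together with the multiplicative identity from Theorem \ref{LMRIV2}, and neither requires computation. The only thing worth double-checking is that the two cases $k<n$ and $k=n$ really do exhaust all configurations (which they do, since $\lambda_1$ is forced to change by Theorem \ref{larroot2} and the ``other'' changed root must sit somewhere in $\{\lambda_2,\ldots,\lambda_n\}$), and that the single-vertex tree is handled trivially by vacuity.
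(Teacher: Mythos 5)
Your proposal is correct and is essentially the paper's own argument: the paper likewise observes that since $T+e$ is connected and not a tree, Theorem \ref{zero} forces the second changed root to be $\lambda_n=0$, and then the identity $\lambda_1\lambda_k=d(v_i)d(v_j)$ from Theorem \ref{LMRIV2} gives $d(v_i)d(v_j)=0$, a contradiction. Your explicit case split on $k<n$ versus $k=n$ merely spells out what the paper compresses into one line.
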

\begin{proof}
If the $LMRIV$ occurs to $T$ in two places by adding a new edge $e=v_iv_j$, by Theorem \ref{zero}  and Theorem \ref{larroot2}, the changed roots are $\lambda_1$ and $\lambda_n (=0)$.  It follows from  Theorem \ref{LMRIV2} that
$$d(v_i)d(v_j)=\lambda_1\lambda_n=0,$$
which implies that $d(v_i)=0$ or $d(v_j)=0$, contradiction.
\end{proof}

\begin{corollary} \label{1+3}
For two nonadjacent vertices $v_i$ and $v_j$, if $d(v_i)+d(v_j)\leq 3$, then the LMRIV will not occur to $G$ in two places by adding a new edge $e=v_iv_j$.
\end{corollary}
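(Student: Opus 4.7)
The plan is to argue by contradiction, leveraging Theorem \ref{LMRIV2}. Suppose the LMRIV occurs to $G$ in two places upon adding $e=v_iv_j$, with changed roots $\lambda_1$ and $\lambda_k$. Theorem \ref{LMRIV2} then gives the Vieta-type identities $\lambda_1+\lambda_k = d(v_i)+d(v_j)+1$ and $\lambda_1\lambda_k = d(v_i)d(v_j)$. Since $G$ is assumed connected, both $d(v_i), d(v_j) \geq 1$, so the hypothesis $d(v_i)+d(v_j)\leq 3$ forces $\lambda_1+\lambda_k \leq 4$ together with $\lambda_1\lambda_k \geq 1$. Combined with the non-negativity of $\lambda_k$, this yields the crucial upper bound $\lambda_1 \leq 4$.

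Next, I would couple this upper bound with the lower bound on $\lambda_1$ provided by Theorem \ref{larroot1}, namely $\lambda_1 \geq \Delta(G)+1$. If $\Delta(G) \geq 3$, then $\lambda_1 \geq 4$, squeezing us into $\lambda_1 = 4$ and $\lambda_k = 0$; this gives $\lambda_1\lambda_k = 0$, contradicting $\lambda_1\lambda_k \geq 1$. The only remaining regime is $\Delta(G) \leq 2$, where I would split further: if $G$ is a tree, Corollary \ref{tree} produces an immediate contradiction; otherwise $G$ is a connected non-tree graph with maximum degree at most $2$, hence a cycle, in which every vertex has degree $2$. But the hypothesis $d(v_i)+d(v_j)\leq 3$ together with $d(v_i), d(v_j) \geq 1$ forces at least one of $v_i$ and $v_j$ to have degree exactly $1$, which is incompatible with $G$ being a cycle.

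I do not expect a significant obstacle: the main subtlety is organizing the case analysis so that one never has to compute the explicit roots of the quadratic $x^2-(d(v_i)+d(v_j)+1)x+d(v_i)d(v_j)$. The cleanest argument avoids those computations entirely by squeezing $\lambda_1$ between the Vieta upper bound $4$ and the Theorem \ref{larroot1} lower bound $\Delta(G)+1$, and then handling the residual low-degree regime with Corollary \ref{tree} and the elementary classification of connected graphs of maximum degree at most $2$.
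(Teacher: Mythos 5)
Your proposal is correct and follows essentially the same route as the paper: bound $\lambda_1$ via Theorem \ref{LMRIV2} and compare with the lower bound $\lambda_1\geq\Delta(G)+1$ of Theorem \ref{larroot1} to force $\Delta(G)\leq 2$, then rule out the cycle using the degree-sum hypothesis and the path using Corollary \ref{tree}. The only (cosmetic) difference is that the paper gets the strict bound $\lambda_1<4$ directly from the quadratic formula, whereas you use $\lambda_1\leq 4$ and dispose of the boundary case $\lambda_1=4$, $\lambda_k=0$ via $\lambda_1\lambda_k=d(v_i)d(v_j)\geq 1$.
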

\begin{proof}
Assume that $LMRIV$ occurs to $G$ in two places by adding the edge $e=v_iv_j$ with $d(v_i)+d(v_j)\leq3$. It follows from Theorem \ref{LMRIV2} and the Vieta's formulas that
$$\lambda_1=\frac{(d(v_i)+d(v_j)+1)+\sqrt{(d(v_i)+d(v_j)+1)^2-4d(v_i)d(v_j)}}{2}<d(v_i)+d(v_j)+1\leq 4.$$
Theorem \ref{larroot1} states that $\lambda_1\geq \Delta(G)+1$, which implies that $\Delta(G)<3$. Since $G$ contains
two vertices $v_i$ and $v_j$ such that $d(v_i)+d(v_j)\leq 3$, we conclude that $G$ is a path.  By  Corollary \ref{tree}, the LMRIV will not occur in two places, a contradiction.
\end{proof}

Next, we will keep on discussing the LMRIV occurring in two places using another tool. By Theorem \ref{cominter}, we may let
\begin{equation}
\mathscr{LM}_{G}(x)=\sum_{i=0}^n(-1)^ib_ix^{n-i}
\end{equation}
and 
\begin{equation}
\mathscr{LM}_{G+e}(x)=\sum_{i=0}^n(-1)^i\tilde{b}_ix^{n-i}.
\end{equation}
If the LMRIV occurs to $G$ in two places by adding a new edge $e=v_iv_j$ and the changed roots of $G$ are $\lambda_1$ and $\lambda_k$, then
 \begin{equation} \label{quotient}
\frac{\widetilde{b}_n}{b_n}=\frac{\mathscr{LM}_{G+e}(0)}{\mathscr{LM}_{G}(0)}=\frac{(\lambda_1+1)(\lambda_k+1)}{\lambda_1 \lambda_k}.
 \end{equation}
For convenience, denote by  $\mathscr{H}(G)$ (or $\mathscr{H}$) the set of all the  TU-subgraphs of $G$ with $n$ edges, and denote by $\mathscr{T}(G)$ the set of all spanning trees of $G$. By Theorem \ref{cominter},  we have that 
\begin{equation}
b_n=\omega(\mathscr{H}(G))=\sum_{H\in \mathscr{H}(G)}\omega(H).
\end{equation}


 \begin{lemma} \label{2+2}
 For two nonadjacent vertices $v_i$ and $v_j$, if $d(v_i)=d(v_j)=2$, then the LMRIV will not occur to $G$ in two places by adding a new edge $e=v_iv_j$.
 \end{lemma}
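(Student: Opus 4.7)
My plan is to combine Theorem~\ref{LMRIV2}, Theorem~\ref{larroot1}, Corollary~\ref{tree}, and the identity~\eqref{quotient} to pin down what $G$ can look like, and then rule out the sole surviving family with a direct count of TU-subgraphs.

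Assuming for contradiction that the LMRIV occurs to $G$ in two places, Theorem~\ref{LMRIV2} applied with $d(v_i)=d(v_j)=2$ gives $\lambda_1+\lambda_k=5$ and $\lambda_1\lambda_k=4$, so the changed roots must be $\lambda_1=4$ and $\lambda_k=1$. Theorem~\ref{larroot1} then forces $\Delta(G)\leq\lambda_1-1=3$; achieving equality would require $G$ to be a star, and the only star with maximum degree $3$ is $K_{1,3}$, whose degree sequence $(3,1,1,1)$ contains no pair of degree-$2$ vertices. Hence $\Delta(G)\leq 2$, and since $G$ is connected, $G$ is either a path $P_n$ or a cycle $C_n$. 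Corollary~\ref{tree} rules out $G=P_n$, and the nonadjacency of $v_i,v_j$ forces $n\geq 4$ when $G=C_n$.

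To dispose of the cycle case I invoke Theorem~\ref{cominter} together with \eqref{quotient}. Since $C_n$ itself is the unique spanning subgraph of $C_n$ with $n$ edges, $\mathscr{H}(C_n)=\{C_n\}$ and $b_n=\omega(C_n)=2$. On the other hand $C_n+e$ is $2$-edge-connected, so deleting any one of its $n+1$ edges leaves a connected spanning subgraph on $n$ vertices with $n$ edges, which is automatically unicyclic; thus $|\mathscr{H}(C_n+e)|=n+1$, each such subgraph has weight $2$, and $\widetilde{b}_n=2(n+1)$. Therefore $\widetilde{b}_n/b_n=n+1\geq 5$, contradicting the value $\widetilde{b}_n/b_n=(\lambda_1+1)(\lambda_k+1)/(\lambda_1\lambda_k)=5/2$ required by \eqref{quotient}.

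The step I expect to be most delicate is the reduction $\Delta(G)\leq 2$, which hinges on the equality clause of Theorem~\ref{larroot1} combined with the observation that $K_{1,3}$ is incompatible with the degree hypothesis. After this reduction, the enumeration of $\mathscr{H}(C_n)$ and $\mathscr{H}(C_n+e)$ is routine, and the proof is closed by comparing two rational numbers.
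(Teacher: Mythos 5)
Your proof is correct and follows essentially the same route as the paper: reduce via Theorem~\ref{LMRIV2} and Theorem~\ref{larroot1} to $G=C_n$, then contradict \eqref{quotient} by computing $b_n=2$ and $\widetilde{b}_n=2(n+1)$ from Theorem~\ref{cominter}. The only (harmless) divergence is that you eliminate the $\Delta(G)=3$ star case by noting $K_{1,3}$ has no degree-$2$ vertices, whereas the paper appeals to Corollary~\ref{tree} since a star is a tree.
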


\begin{proof}
Assume that $LMRIV$ occurs to $G$ in two places by adding the edge $e=v_iv_j$ with $d(v_i)=d(v_j)=2$. It follows from Theorem \ref{LMRIV2} that $\lambda_1=4$ and $\lambda_k=1$. By Theorem \ref{larroot1}, we can deduce that $\Delta(G) \leq 3$. If $\Delta(G)= 3$, then $\lambda_1 =\Delta(G)+1$, and so Theorem \ref{larroot1} states that $G$ is a star. This contradicts Corollary \ref{tree}.
If $\Delta(G)\leq 2$, then $G$ is a tree unless that $G$ is a cycle $C_n$. By Corollary \ref{tree}, we only consider the case that $G=C_n \  (n \geq 3)$.  By Theorem \ref{cominter}, it is clear that $b_n=2$ and $\widetilde{b}_n=2(n+1)$.
By \eqref{quotient}, $$\frac{2(n+1)}{2}=\frac{\widetilde{b}_n}{b_n}=\frac{(\lambda_1+1)(\lambda_k+1)}{\lambda_1 \lambda_k}=\frac{5}{2}.$$ This contradiction completes the proof.
\end{proof}

Let us introduce more notations and definitions for later use.  We always suppose that $G$ is a connected graph of order $n \geq 3$ and size $m$. The girth of $G$ is denoted by $g(G)$, and the dimension of cycle space of $G$ is denoted by $c(G)$. We know that $c(G)=m-n+1$. A connected graph $G$ is unicyclic (bicyclic, resp.) if $c(G)=1$ ($c(G)=2$,  resp.).
Let $\pi=\{V_1, \ldots, V_{p(\pi)}\}$ be a partition of $V(G)$. Denote by $G_i$ the subgraph of $G$ induced by $V_i$ for $1 \leq i \leq p(\pi)$.
A partition $\pi$ is called {\it TU-admissible} if, for every $i$ with $ 1\leq i \leq p(\pi)$, $G_i$ is a connected graph with $c(G_i) \geq 1$. Denote by $\Omega$ the set of all these TU-admissible partitions. We use $\mathscr{H}_{\pi}(G)$ to denote the subset of $\mathscr{H}(G)$ consisting of all TU-subgraphs of size $n$ whose components are corresponding to $\pi$. 
Note that for each $H \in \mathscr{H}(G)$, all components of $H$ are unicyclic and $H$ is a spanning subgraph of $G$, because that $H$ is a TU-subgraph of size $n$.
Then, $\mathscr{H}(G)$ can be partitioned as $\{\mathscr{H}_{\pi}(G) \}_{\pi \in \Omega}$.
If $p(\pi)=1$, then we simply use $\mathscr{H}_1(G)$ instead of $\mathscr{H}_{\pi}(G)$, which denotes the set consisting of all unicyclic spanning subgraphs of $G$.

\begin{lemma} \label{cou}
If $G$ is a connected graph with $c(G) \geq 1$, then $\frac{|\mathscr{T}(G)|}{|\mathscr{H}_1(G)|} \geq \frac{g(G)}{c(G)}$.
\end{lemma}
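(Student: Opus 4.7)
The plan is to prove this by a double-counting argument on pairs $(T,e)$ with $T\in\mathscr{T}(G)$ a spanning tree and $e\in E(G)\setminus E(T)$ a non-tree edge. The key observation is that such pairs biject onto the set of ``unicyclic spanning subgraph together with a distinguished cycle-edge,'' which will give us control on $|\mathscr{H}_1(G)|$ weighted by cycle length.

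First I would count the pairs from the spanning-tree side. Every spanning tree $T$ of the connected graph $G$ has exactly $n-1$ edges, so $|E(G)\setminus E(T)|=m-(n-1)=c(G)$. Hence the total number of pairs $(T,e)$ is exactly $|\mathscr{T}(G)|\cdot c(G)$.

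Next I would count the same pairs from the unicyclic side. For any pair $(T,e)$, the graph $T+e$ is connected, spanning, and contains $n$ edges, with exactly one cycle (the unique cycle formed by $e$ together with the $T$-path between its endpoints); thus $T+e\in\mathscr{H}_1(G)$. Conversely, given $U\in\mathscr{H}_1(G)$ with unique cycle $C_U$ of length $\ell(U)$, the pairs mapping to $U$ are precisely $(U-f,f)$ as $f$ ranges over the $\ell(U)$ edges of $C_U$ (removing any non-cycle edge would disconnect $U$). This yields the identity
\[
|\mathscr{T}(G)|\cdot c(G)\;=\;\sum_{U\in\mathscr{H}_1(G)}\ell(U).
\]

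Finally, since $C_U$ is a cycle of $G$ we have $\ell(U)\geq g(G)$ for every $U\in\mathscr{H}_1(G)$, so the right side is at least $g(G)\cdot|\mathscr{H}_1(G)|$. Rearranging gives the claimed inequality $|\mathscr{T}(G)|/|\mathscr{H}_1(G)|\geq g(G)/c(G)$. There is no real obstacle here; the only thing worth double-checking is the fiber-counting step, namely that removing a non-cycle edge of $U$ disconnects it while removing any of the $\ell(U)$ cycle edges yields a spanning tree, both of which follow from $U$ being connected with exactly one cycle.
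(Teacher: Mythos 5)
Your proof is correct and is essentially the same double-counting argument as the paper's: the paper counts pairs $(T,U)$ with $E(T)\subset E(U)$, which corresponds exactly to your pairs $(T,e)$ via $U=T+e$, giving $|\mathscr{T}(G)|\,c(G)$ on one side and at least $g(G)\,|\mathscr{H}_1(G)|$ on the other. Your write-up is in fact slightly more detailed, as it makes the fiber-counting over each unicyclic subgraph explicit.
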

\begin{proof}  We count the pairs $(T,U)$ consisting of unicyclic spanning subgraphs $U$ and spanning trees $T$ of $G$ satisfying $E(T) \subset E(U)$.  On the one hand, the number of such pairs is given by $|\mathscr{T}(G)|c(G)$. On the other hand, the  number of pairs  $(T,U)$ is at least $|\mathscr{H}_1(G)|g(G)$.  This completes the proof.
\end{proof}

\begin{remark} \label{rmk}
For a connected subgraph $H$ of $G$ with $c(H) \geq 1$, note that $g(H) \geq g(G), c(H) \leq c(G)$ and so $\frac{g(H)}{c(H)} \geq \frac{g(G)}{c(G)}$.
\end{remark}

\begin{lemma} \label{1+1}
	Let $G$ be a connected graph with $\frac{g(G)}{c(G)}>1$. For two nonadjacent vertices $v_i$ and $v_j$, if $d(v_i)=1$ or $d(v_j)=1$, then the LMRIV will not occur to $G$ in two places by adding a new edge $e=v_iv_j$.
\end{lemma}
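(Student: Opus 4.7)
The plan is to argue by contradiction: suppose LMRIV occurs in two places by adding $e=v_iv_j$ with $d(v_i)=1$, and set $d:=d(v_j)$. Theorem \ref{LMRIV2} gives $\lambda_1+\lambda_k=d+2$ and $\lambda_1\lambda_k=d$, so identity \eqref{quotient} forces the constant-term ratio $\widetilde{b}_n/b_n=(2d+3)/d$. For $d\in\{1,2\}$ I would argue as in Corollary \ref{1+3} and Lemma \ref{2+2}: the explicit value $\lambda_1=(d+2+\sqrt{d^2+4})/2$ combined with Theorem \ref{larroot1} forces $\Delta(G)\leq 2$, so $G$ is a tree (excluded by Corollary \ref{tree}), a cycle (in which $d(v_i)=2$, contradiction), or $K_2$ (which has no non-adjacent pair). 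So I focus on $d\geq 3$.

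Set $H:=G-v_i$ and let $v_{i'}$ denote the unique neighbor of $v_i$ in $G$. Because $v_i$ has degree $1$, every element of $\mathscr{H}(G)$ must use the edge $v_iv_{i'}$ (otherwise $v_i$ would be isolated in a TU-subgraph with $n$ edges on $n$ vertices, contradicting the all-unicyclic structure), and vertex deletion therefore induces a weight-preserving bijection $\mathscr{H}(G)\leftrightarrow\mathscr{H}(H)$; write $b(H):=b_{n-1}(H)=b_n(G)$. I would split $\mathscr{H}(G+e)$ according to which edges at $v_i$ are used: subgraphs containing only $v_iv_{i'}$ are exactly $\mathscr{H}(G)$ (weight $b(H)$); subgraphs containing only $e$ biject with $\mathscr{H}(H)$ by the same deletion trick (weight $b(H)$); and Case (A) subgraphs containing both edges at $v_i$ contribute a residual $\sum_A\omega$. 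This yields $\widetilde{b}_n=2b(H)+\sum_A\omega$, and the LMRIV condition translates to
\begin{equation*}
\sum_A\omega \;=\; \frac{3}{d}\,b(H).
\end{equation*}

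The heart of the proof is the reverse inequality $\sum_A\omega\geq(g(G)/c(G))\,b(H)$, obtained by double counting. Each Case (A) subgraph $F'$ corresponds bijectively, via $H^*:=F'-v_i$, to a TU-subgraph of $H$ with $n-2$ edges having exactly one tree component $T(H^*)$ whose vertex set meets $\{v_{i'},v_j\}$. I would count pairs $(F,e')$ where $F\in\mathscr{H}(H)$ and $e'$ is a cycle edge of $F$ lying in the component of $F$ that contains $v_{i'}$ or $v_j$; each such pair produces an admissible $H^*=F-e'$ and its associated $F'$. Casework on whether $v_{i'},v_j$ lie in the same component $C$ of $F$ (giving $g(C)\geq g(G)$ valid choices of $e'$, each producing an $F'$ of weight $2^{p(F)}$) or in distinct components $C_a,C_b$ (giving $g(C_a)+g(C_b)\geq 2g(G)$ choices, each producing an $F'$ of weight $2^{p(F)-1}$) shows that the contribution of $F$ to the weighted sum $\Sigma$ is at least $g(G)\cdot 2^{p(F)}$. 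Summing over $F$ gives $\Sigma\geq g(G)\cdot b(H)$. Conversely, a given $H^*$ is produced exactly by the pairs $(H^*+e',e')$ whose $e'$ is a chord of the tree $T(H^*)$ inside $H$, and the chord count equals the cyclomatic number of the induced subgraph $H[V(T(H^*))]$, which is at most $c(H)=c(G)$. Consequently $\Sigma\leq c(G)\cdot\sum_A\omega$, and combining the two bounds yields $\sum_A\omega\geq(g(G)/c(G))\,b(H)$. Together with the LMRIV identity this forces $g(G)/c(G)\leq 3/d$, contradicting $g(G)/c(G)>1$ and $d\geq 3$.

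The main obstacle is the combinatorial bookkeeping in the last paragraph: carefully verifying the Case (A) $\leftrightarrow H^*$ correspondence, matching the component counts (and hence the weights $2^{p(F)}$ versus $2^{p(F)-1}$) correctly in the same-component and split subcases, and confirming that the chord count of any tree $T(H^*)$ inside $H$ is uniformly bounded by $c(H)$. Once these pieces are in place, the inequalities $g/c\leq 3/d<g/c$ assemble cleanly.
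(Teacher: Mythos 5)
Your proposal is correct and reaches the contradiction by the same overall strategy as the paper: split $\mathscr{H}(G+e)$ according to which of the two edges at $v_i$ (the new edge $e$ and the pendant edge $f$) are present, observe that the ``only $f$'' and ``only $e$'' parts each carry weight $b_n$, and reduce everything to showing that the ``both $e$ and $f$'' part has weight exceeding $b_n=\omega(\mathscr{H}(G))$. Where you diverge is in how that last inequality is proved. The paper partitions $\mathscr{H}(G)$ by TU-admissible vertex partitions $\pi$, maps each class $\mathscr{H}_{\pi}(G)$ into $\mathscr{H}^{e,f}(G+e)$, and verifies the ratio $>1$ class by class (with Type I/II casework according to whether $v_i,v_j$ share a block), quoting Lemma \ref{cou} for each induced piece $G_i$. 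You instead run a single global double count of pairs $(F,e')$ with $F\in\mathscr{H}(H)$ and $e'$ a cycle edge in the component of $F$ meeting $\{v_{i'},v_j\}$, which yields the quantitative bound $\omega(\mathscr{H}^{e,f}(G+e))\geq \tfrac{g(G)}{c(G)}\,\omega(\mathscr{H}(G))$ directly; this is essentially Lemma \ref{cou} globalized, with the $2^{p(F)}$ versus $2^{p(F)-1}$ weight bookkeeping absorbing the paper's Type I/II distinction. Your version buys an explicit constant and avoids the partition machinery; the paper's buys the ability to reuse Lemma \ref{cou} verbatim and the same template in Theorem \ref{mainthm}. Two small points to make explicit when writing it up: the upper count needs $c(H[V(T(H^*))])\leq c(H)=c(G)$ (true because $H[V(T)]$ is a connected subgraph of the connected graph $H=G-v_i$), and the lower count is only useful because $b(H)=b_n>0$, which holds since the hypothesis $g(G)/c(G)>1$ forces $c(G)\geq 1$, so $G$ is not a tree and Theorem \ref{zero} applies.
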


\begin{proof}  Assume that $LMRIV$ occurs to $G$ in two places by adding the edge $e=v_iv_j$ with $d(v_i)=1$. Denote by $f$ the edge incident to $v_i$. By Corollary \ref{1+3}, we can assume that $d(v_j) \geq 3$.
	 It follows from \eqref{quotient} and Theorem \ref{LMRIV2} that $$\frac{\widetilde{b}_n}{b_n}=\frac{(\lambda_1+1)(\lambda_k+1)}{\lambda_1\lambda_k}=1+\frac{1}{d(v_i)}+\frac{1}{d(v_j)}+\frac{2}{d(v_i)d(v_j)}\leq 3.$$
	To get a contradiction, we prove that  $\frac{\widetilde{b}_n}{b_n}>3$.
 Applying Theorem \ref{cominter}, we have
 \begin{equation*}
 \frac{\widetilde{b}_n}{b_n} =\frac{\omega(\mathscr{H}(G+e))}{\omega(\mathscr{H}(G))}=\frac{\omega(\mathscr{H}^e(G+e))+\omega(\mathscr{H}^f(G+e))+\omega(\mathscr{H}^{e,f}(G+e))}{\omega(\mathscr{H}(G))},
\end{equation*}
where $\mathscr{H}^e(G+e)$ ($\mathscr{H}^f(G+e)$, $\mathscr{H}^{e,f}(G+e)$, resp.) is the subset of $\mathscr{H}(G+e)$ consisting of the TU-subgraphs of size $n$ containing
$e$ but no $f$ (containing
$f$ but no $e$, containing $e$ and $f$, resp.). Clearly, $\omega(\mathscr{H}^f(G+e))=\omega(\mathscr{H}(G))$. Note that  every TU-subgraph in $\mathscr{H}^e(G+e)$ can be obtained from some TU-subgraph of $G$ by replacing $f$ by $e$, and vice versa.
It is not hard to see that  $\omega(\mathscr{H}^e(G+e))=\omega(\mathscr{H}(G))$.  Therefore, it is enough to show that
\begin{equation} \label{H(G+e)}
\frac{\omega(\mathscr{H}^{e,f}(G+e))}{\omega(\mathscr{H}(G))} > 1.
\end{equation}

Let $\pi=\{V_1, \ldots, V_p\}$ be a TU-admissible partition of $V(G)$.  We say that $\pi$ is of Type I if $v_i$ and $v_j$ lie in the same element of $\pi$. Otherwise,  $\pi$ is called of Type II.
 Recall that $\mathscr{H}_{\pi}(G)$ is the subset of $\mathscr{H}(G)$ consisting of the TU-subgraphs of size $n$ whose components are corresponding to $\pi$, and  $\mathscr{H}(G)$ can be partitioned as $\{\mathscr{H}_{\pi}(G) \}_{\pi \in \Omega}$.  For each $\mathscr{H}_{\pi}(G)$, we define the subset $\sigma(\mathscr{H}_{\pi}(G))$ of $\mathscr{H}^{e,f}(G+e)$ as follows.

If $\pi$ is of Type I, without loss of generality, assume that $v_i, v_j \in V_1$. Denote by $G_i$ ($\widetilde{G}_i$, resp.) the subgraph of $G$ ($G+e$, resp.) induced by $V_i$ for $i =1, \ldots, p$. In this situation, $\sigma(\mathscr{H}_{\pi}(G))$ is defined to be the subset of $\mathscr{H}^{e,f}(G+e)$ consisting of the TU-subgraphs of size $n$ whose components are corresponding to $\pi$.
 Note that for any $\widetilde{H}\in \sigma(\mathscr{H}_{\pi}(G))$, all components of $\widetilde{H}$ are unicyclic. Therefore, the component $\widetilde{H}[V_1]$ of $\widetilde{H}$ corresponding to $V_1$ is comprised of a spanning tree $\widetilde{G}_1-v_i$ together with edge $e$ and $f$, and the component $\widetilde{H}[V_i]$ of $H$ corresponding to $V_i$ is a unicyclic spanning subgraph of $\widetilde{G}_i$ for $i=2, \ldots, p$, which is preserved while we add $e$ to $G$. Therefore, we can deduce that
\begin{equation} \label{qu geq 1}
\begin{split}
\frac{\sum_{\widetilde{H} \in \sigma(\mathscr{H}_{\pi}(G))} \omega(\widetilde{H})}{\sum_{H \in \mathscr{H}_{\pi}(G) } \omega(H)}&=\frac{2^p |\mathscr{T}(\widetilde{G}_1-v_i)| \Pi_{i=2}^p|\mathscr{H}_1(\widetilde{G}_i)|}{2^p\Pi_{i=1}^p|\mathscr{H}_1(G_i)|}\\
&=\frac{|\mathscr{T}(\widetilde{G}_1-v_i)|}{|\mathscr{H}_1(G_1)|}\\
&=\frac{|\mathscr{T}(G_1)|}{|\mathscr{H}_1(G_1)|}>1,
\end{split}
\end{equation}
where the last inequality follows from Lemma \ref{cou} and Remark \ref{rmk}.

If $\pi$ is of Type II, without loss of generality, assume that $v_i \in V_1$ and $v_j \in V_2$. We still use $G_i$ ($\widetilde{G}_i$, resp.) to denote  the subgraph of $G$ ($G+e$, resp.) induced by $V_i$ for $i =1, \ldots, p$.
In this situation,  $\sigma(\mathscr{H}_{\pi}(G))$ is defined to be the subset of $\mathscr{H}^{e,f}(G+e)$ consisting of all TU-subgraphs $\widetilde{H}$ of size $n$ satisfying the following conditions:
 \begin{itemize}
 	\item The components of $\widetilde{H}$  are corresponding to the partition $\{V_1\cup V_2, V_3, \ldots, V_p\}$;
 	\item $\widetilde{H}[V_1 \cup V_2]-e$ exactly have two components which correspond to $V_1$ and $V_2$ respectively. Equivalently, $e$ connects a spanning tree of $\widetilde{G}_1$ and a unicyclic spanning subgraph of $\widetilde{G}_2$, or $e$ connects a spanning tree of $\widetilde{G}_2$ and a unicyclic spanning subgraph of $\widetilde{G}_1$.
 \end{itemize}
Note that $G_i=\widetilde{G}_i$ for $i=1, \ldots, p$ as $\pi$ is of Type II.
Therefore, we  can deduce that
 \begin{equation} \label{qu geq 2}
 \begin{split}
 \frac{\sum_{\widetilde{H} \in \sigma(\mathscr{H}_{\pi}(G))} \omega(\widetilde{H})}{\sum_{H \in \mathscr{H}_{\pi}(G) } \omega(H)}&=\frac{2^{p-1}(|\mathscr{T}(\widetilde{G}_1)||\mathscr{H}_1(\widetilde{G}_2)|+|\mathscr{T}(\widetilde{G}_2)||\mathscr{H}_1(\widetilde{G}_1)|) \Pi_{i=3}^p|\mathscr{H}_1(\widetilde{G}_i)|}{2^p\Pi_{i=1}^p|\mathscr{H}_1(G_i)|}\\
 &=\frac{|\mathscr{T}(G_1)|}{2|\mathscr{H}_1(G_1)|}+\frac{|\mathscr{T}(G_2)|}{2|\mathscr{H}_1(G_2)|} >1,
 \end{split}
 \end{equation}
where the last inequality follows from Lemma \ref{cou} and Remark \ref{rmk}.

We are now ready to prove \eqref{H(G+e)}. By  the above definition of   $\sigma(\mathscr{H}_{\pi}(G))$, it is clear that $\sigma(\mathscr{H}_{\pi}(G)) \cap\sigma(\mathscr{H}_{\pi'}(G)) =\emptyset$ if $\pi \neq \pi'$.
Note that  $\omega(\mathscr{H}(G))= \sum_{\pi \in \Omega}\sum_{H \in \mathscr{H}_{\pi}(G) } \omega(H)$ and $\omega(\mathscr{H}^{e,f}(G+e))= \sum_{\pi \in \Omega}\sum_{\widetilde{H} \in \sigma(\mathscr{H}_{\pi}(G)) }  \omega(\widetilde{H})$. To establish \eqref{H(G+e)}, it suffices to show that $$\frac{\sum_{\widetilde{H} \in \sigma(\mathscr{H}_{\pi}(G))} \omega(\widetilde{H})}{\sum_{H \in \mathscr{H}_{\pi}(G) } \omega(H)}> 1$$  for each  $\pi \in \Omega$, which has been provided by \eqref{qu geq 1} and \eqref{qu geq 2}. This completes the proof.
\end{proof}

We are now ready to present the main theorem in this paper.

\begin{theorem} \label{mainthm}
	Let $G$ be a connected graph with $\frac{g(G)}{c(G)}>\frac{7}{6}$. Then, the LMRIV will not occur to $G$ in two places by adding a new edge.
\end{theorem}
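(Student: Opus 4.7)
The plan is to adapt the TU-subgraph counting method of Lemma~\ref{1+1} to the case $d(v_i),d(v_j)\geq 2$, where the auxiliary edge $f$ used there is no longer available. Suppose for contradiction that the LMRIV occurs to $G$ in two places by adding $e=v_iv_j$. Since $g(G)/c(G)>7/6>1$, Corollary~\ref{1+3}, Lemma~\ref{2+2}, and Lemma~\ref{1+1} together force $d(v_i),d(v_j)\geq 2$ with $(d(v_i),d(v_j))\neq(2,2)$, so $d(v_i)+d(v_j)\geq 5$. By Theorem~\ref{LMRIV2},
\[
\frac{\widetilde{b}_n}{b_n}=\frac{(\lambda_1+1)(\lambda_k+1)}{\lambda_1\lambda_k}=1+\frac{d(v_i)+d(v_j)+2}{d(v_i)d(v_j)},
\]
and an elementary optimization under these constraints gives $\frac{d(v_i)+d(v_j)+2}{d(v_i)d(v_j)}\leq \frac{7}{6}$, with equality only at $\{d(v_i),d(v_j)\}=\{2,3\}$; hence $\widetilde{b}_n/b_n\leq 13/6$. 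The strategy is to contradict this by proving $\widetilde{b}_n/b_n>1+g(G)/c(G)>13/6$.

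Let $\mathscr{H}^{\star}$ denote the spanning TU-subgraphs of $G+e$ of size $n$ that contain $e$. Theorem~\ref{cominter} gives $\widetilde{b}_n/b_n=1+\omega(\mathscr{H}^{\star})/\omega(\mathscr{H}(G))$, so it suffices to prove $\omega(\mathscr{H}^{\star})\geq(g(G)/c(G))\,\omega(\mathscr{H}(G))$. I would partition $\mathscr{H}(G)=\bigsqcup_{\pi\in\Omega}\mathscr{H}_\pi(G)$ over TU-admissible partitions and, for each $\pi$, define a subset $\sigma(\pi)\subseteq\mathscr{H}^{\star}$ such that the $\sigma(\pi)$ are pairwise disjoint and $\omega(\sigma(\pi))\geq(g(G)/c(G))\,\omega(\mathscr{H}_\pi(G))$ for every $\pi$.

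For a Type I partition ($v_i,v_j\in V_1$), let $\sigma(\pi)$ consist of those $\widetilde{H}\in\mathscr{H}^{\star}$ with components corresponding to $\pi$ and with $e$ lying on the cycle of the $V_1$-component; such a component must equal $T\cup\{e\}$ for some $T\in\mathscr{T}(G_1)$, and a direct weight computation gives per-partition ratio $|\mathscr{T}(G_1)|/|\mathscr{H}_1(G_1)|$. For a Type II partition ($v_i\in V_1, v_j\in V_2$), let $\sigma(\pi)$ consist of those $\widetilde{H}\in\mathscr{H}^{\star}$ whose components correspond to $\{V_1\cup V_2,V_3,\ldots,V_p\}$ and in which $e$ is the only edge of $\widetilde{H}$ between $V_1$ and $V_2$; the merged component then joins a spanning tree of $G_1$ to a unicyclic spanning subgraph of $G_2$ via $e$, or the symmetric configuration, giving per-partition ratio
\[
\frac{1}{2}\!\left(\frac{|\mathscr{T}(G_1)|}{|\mathscr{H}_1(G_1)|}+\frac{|\mathscr{T}(G_2)|}{|\mathscr{H}_1(G_2)|}\right).
\]
Lemma~\ref{cou} applied to the induced subgraphs, combined with $g(G_i)\geq g(G)$ and $c(G_i)\leq c(G)$, bounds each per-partition ratio below by $g(G)/c(G)$.

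The main obstacle will be verifying pairwise disjointness of the $\sigma(\pi)$. Type I and Type II contributions are separated by whether $e$ is a cycle edge or a bridge of its component in $\widetilde{H}$; distinct Type I partitions give distinct component structures; and two distinct Type II partitions yielding the same merged component structure are distinguished because the two components of $\widetilde{H}[V_1\cup V_2]-e$ are uniquely determined by $\widetilde{H}$, so $V_1$ and $V_2$ must be recovered as the components containing $v_i$ and $v_j$ respectively. With disjointness in hand, summing the per-partition estimates yields $\omega(\mathscr{H}^{\star})/\omega(\mathscr{H}(G))\geq g(G)/c(G)>7/6$, whence $\widetilde{b}_n/b_n>13/6$, contradicting the earlier bound and completing the proof.
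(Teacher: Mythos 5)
Your proposal is correct and takes essentially the same route as the paper: the identical reduction via Corollary \ref{1+3}, Lemma \ref{2+2} and Lemma \ref{1+1} to $d(v_i),d(v_j)\geq 2$ with $\widetilde{b}_n/b_n\leq \frac{13}{6}$, followed by splitting $\mathscr{H}(G+e)$ into the TU-subgraphs containing $e$ and those avoiding it, and then running the partition argument of Lemma \ref{1+1} on the former. The paper explicitly leaves that last adaptation ``to the reader,'' and your Type I/Type II analysis---with $e$ as a cycle edge versus a bridge, the per-partition ratios $|\mathscr{T}(G_1)|/|\mathscr{H}_1(G_1)|$ and $\tfrac12\bigl(|\mathscr{T}(G_1)|/|\mathscr{H}_1(G_1)|+|\mathscr{T}(G_2)|/|\mathscr{H}_1(G_2)|\bigr)$, and the monotonicity $g(G_i)\geq g(G)$, $c(G_i)\leq c(G)$ feeding into Lemma \ref{cou}---is precisely the intended completion.
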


\begin{proof}
Assume that $LMRIV$ occurs to $G$ in two places by adding the edge $e=v_iv_j$.
 By combining Corollary  \ref{1+3}, Corollary \ref{2+2} and Lemma \ref{1+1}, we may assume that $2 \leq d(v_i) < d(v_j)$.  It follows from \eqref{quotient} and Theorem \ref{LMRIV2} that $$\frac{\widetilde{b}_n}{b_n}=\frac{(\lambda_1+1)(\lambda_k+1)}{\lambda_1\lambda_k}=1+\frac{1}{d(v_i)}+\frac{1}{d(v_j)}+\frac{2}{d(v_i)d(v_j)}\leq \frac{13}{6}$$
with equality holds if and only if $d(v_i)=2$ and $d(v_j)=3$.

	To get a contradiction, we  prove that  $\frac{\widetilde{b}_n}{b_n}>\frac{13}{6}$.
Applying Theorem \ref{cominter}, we have
\begin{equation*}
\frac{\widetilde{b}_n}{b_n} =\frac{\omega(\mathscr{H}(G+e))}{\omega(\mathscr{H}(G))}=\frac{\omega(\mathscr{H}^e(G+e))+\omega(\mathscr{H}^{\hat{e}}(G+e))}{\omega(\mathscr{H}(G))},
\end{equation*}
where $\mathscr{H}^e(G+e)$ ($\mathscr{H}^{\hat{e}}(G+e)$, resp.) is the subset of $\mathscr{H}(G+e)$ consisting of all TU-subgraphs of size $n$ containing
$e$ (containing no $e$, resp.). Clearly, $\omega(\mathscr{H}^{\hat{e}}(G+e))=\omega(\mathscr{H}(G))$.  Therefore, it is enough to show that
\begin{equation}  \label{add2}
\frac{\omega(\mathscr{H}^{e}(G+e))}{\omega(\mathscr{H}(G))} > \frac{7}{6}.
\end{equation}

The remaining proof is similar to the proof of Lemma \ref{1+1}. For the sake of completeness, we include the details here.

Let $\pi=\{V_1, \ldots, V_p\}$ be a TU-admissible partition of $V(G)$. We say that $\pi$ is of Type I if $v_i$ and $v_j$ lie in the same element of $\pi$. Otherwise,  $\pi$ is called of Type II. For each $\mathscr{H}_{\pi}(G)$, we define the subset $\sigma(\mathscr{H}_{\pi}(G))$ of $\mathscr{H}^{e}(G+e)$ as follows. If $\pi$ is of Type I, assume that $v_i, v_j \in V_1$. Denote by $G_i$ ($\widetilde{G}_i$, resp.) the subgraph of $G$ ($G+e$, resp.) induced by $V_i$ for $i =1, \ldots, p$. In the situation, $\sigma(\mathscr{H}_{\pi}(G))$ is defined to be the subset of $\mathscr{H}^{e}(G+e)$ consisting of all TU-subgraphs of size $n$ whose components are corresponding to $\pi$. 

Note that for any $\widetilde{H}\in \sigma(\mathscr{H}_{\pi}(G))$, all components of $\widetilde{H}$ are unicyclic. Therefore, the component $\widetilde{H}[V_1]$ of $\widetilde{H}$ corresponding to $V_1$ is comprised of a spanning tree $\widetilde{G}_1-v_i$ together with edge $e$ and $f$, and the component $\widetilde{H}[V_i]$ of $H$ corresponding to $V_i$ is a unicyclic spanning subgraph of $\widetilde{G}_i$ for $i=2, \ldots, p$, which is preserved while we add $e$ to $G$.

Therefore, we can deduce that
\begin{equation} \label{add1}
\frac{\sum_{\widetilde{H} \in \sigma(\mathscr{H}_{\pi}(G))} \omega(\widetilde{H})}{\sum_{H \in \mathscr{H}_{\pi}(G) } \omega(H)}
=\frac{2^p |\mathscr{T}(G_1)| \Pi_{i=2}^p|\mathscr{H}_1(\widetilde{G}_i)|}{2^p\Pi_{i=1}^p|\mathscr{H}_1(G_i)|}=\frac{|\mathscr{T}(G_1)|}{|\mathscr{H}_1(G_1)|}\geq \frac{g(G_1)}{c(G_1)}>\frac{7}{6},
\end{equation}
where the last inequality follows from Lemma \ref{cou} and Remark \ref{rmk}.

If $\pi$ is of Type II, assume that $v_i \in V_1$ and $v_j \in V_2$. We still use $G_i$ ($\widetilde{G}_i$, resp.) to denote  the subgraph of $G$ ($G+e$, resp.) induced by $V_i$ for $i =1, \ldots, p$.
In the situation,  $\sigma(\mathscr{H}_{\pi}(G))$ is defined to be the subset of $\mathscr{H}^{e}(G+e)$ consisting of the TU-subgraphs $\widetilde{H}$ of size $n$ satisfying the following conditions:
\begin{itemize}
	\item The components of $\widetilde{H}$  are corresponding to the partition $\{V_1\cup V_2, V_3, \ldots, V_p\}$;
	\item $\widetilde{H}[V_1 \cup V_2]-e$ exactly have two components which correspond to $V_1$ and $V_2$ respectively. Equivalently, $e$ connects a spanning tree of $\widetilde{G}_1$ and a unicyclic spanning subgraph of $\widetilde{G}_2$, or $e$ connects a spanning tree of $\widetilde{G}_2$ and a unicyclic spanning subgraph of $\widetilde{G}_1$.
\end{itemize}
Note that $G_i=\widetilde{G}_i$ for $i=1, \ldots, p$ as $\pi$ is of Type II.
Therefore, we  can deduce that
\begin{equation} \label{add5}
\begin{split}
\frac{\sum_{\widetilde{H} \in \sigma(\mathscr{H}_{\pi}(G))} \omega(\widetilde{H})}{\sum_{H \in \mathscr{H}_{\pi}(G) } \omega(H)}&=\frac{2^{p-1}(|\mathscr{T}(\widetilde{G}_1)||\mathscr{H}_1(\widetilde{G}_2)|+|\mathscr{T}(\widetilde{G}_2)||\mathscr{H}_1(\widetilde{G}_1)|) \Pi_{i=3}^p|\mathscr{H}_1(\widetilde{G}_i)|}{2^p\Pi_{i=1}^p|\mathscr{H}_1(G_i)|}\\
&=\frac{|\mathscr{T}(G_1)|}{2|\mathscr{H}_1(G_1)|}+\frac{|\mathscr{T}(G_2)|}{2|\mathscr{H}_1(G_2)|} >\frac{7}{6},
\end{split}
\end{equation}
where the last inequality follows from Lemma \ref{cou} and Remark \ref{rmk}.

We are now ready to prove \eqref{add2}. By  the above definition of   $\sigma(\mathscr{H}_{\pi}(G))$, it is clear that $\sigma(\mathscr{H}_{\pi}(G)) \cap\sigma(\mathscr{H}_{\pi'}(G)) =\emptyset$ if $\pi \neq \pi'$.
Note that  $\omega(\mathscr{H}(G))= \sum_{\pi \in \Omega}\sum_{H \in \mathscr{H}_{\pi}(G) } \omega(H)$ and $\omega(\mathscr{H}^{e}(G+e))= \sum_{\pi \in \Omega}\sum_{\widetilde{H} \in \sigma(\mathscr{H}_{\pi}(G)) }  \omega(\widetilde{H})$. To establish \eqref{H(G+e)}, it suffices to show that $$\frac{\sum_{\widetilde{H} \in \sigma(\mathscr{H}_{\pi}(G))} \omega(\widetilde{H})}{\sum_{H \in \mathscr{H}_{\pi}(G) } \omega(H)}> \frac{7}{6}$$  for all TU-admissible $\pi \in \Omega$, which has been provided by \eqref{add1} and \eqref{add5}. The result follows.
\end{proof}

The following result is a consequence of Theorem \ref{mainthm}.
\begin{corollary}
Let $G$ be a connected unicyclic or bicyclic graph. Then, the LMRIV will not occur to $G$ in two places by adding a new edge.
\end{corollary}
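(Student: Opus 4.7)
The plan is to verify that any connected unicyclic or bicyclic graph automatically satisfies the hypothesis $\frac{g(G)}{c(G)} > \frac{7}{6}$ of Theorem \ref{mainthm}, and then simply invoke that theorem. Recall that $c(G) = m - n + 1$, so a connected unicyclic graph has $c(G) = 1$ and a connected bicyclic graph has $c(G) = 2$. Since the paper works throughout with simple graphs (no loops, no multiple edges), every cycle in $G$ has length at least $3$, and hence $g(G) \geq 3$ whenever $c(G) \geq 1$.

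The verification then splits into two one-line checks. In the unicyclic case, $g(G)/c(G) \geq 3/1 = 3 > 7/6$. In the bicyclic case, $g(G)/c(G) \geq 3/2 > 7/6$. In both cases the hypothesis of Theorem \ref{mainthm} is met, so LMRIV cannot occur in two places by adding a new edge.

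There is essentially no obstacle to overcome: the corollary is a direct numerical consequence of Theorem \ref{mainthm} with $c(G) \in \{1, 2\}$. The only point requiring a moment of care is the tacit use of the standing simplicity hypothesis, which is what forces $g(G) \geq 3$ and thereby makes both ratios comfortably exceed the threshold $7/6$.
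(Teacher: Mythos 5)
Your proposal is correct and matches the paper's intent exactly: the paper states the corollary follows immediately from Theorem \ref{mainthm}, and the intended verification is precisely your observation that $c(G)\in\{1,2\}$ together with $g(G)\geq 3$ (from simplicity) gives $g(G)/c(G)\geq 3/2 > 7/6$. No difference in approach; your write-up just makes explicit the one-line check the paper leaves implicit.
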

We finish the paper with the conjecture below which has been verified by computer for $n \leq 9$.
\begin{conjecture}
	Let $G$ be a connected graph of order $n$. The LMRIV will not occur to $G$ in two places by adding a new edge.
\end{conjecture}

\section*{Acknowledgments} We are grateful to the referees for their comments and suggestions.


\end{document}